\documentclass[11pt,reqno]{amsart}
\usepackage{amsbsy,amsfonts,amsmath,amssymb,amscd,amsthm}
\usepackage{mathrsfs}
\usepackage{subfigure,enumitem,color,graphicx}
\usepackage[a4paper,text={6.1in,8in},centering,includefoot,foot=1in]{geometry}
\usepackage{pxfonts}
\usepackage[colorlinks=true,linkcolor=blue,citecolor=green]{hyperref}
\usepackage{srcltx}

\small\normalsize
\setlength{\parskip}{.05in} \setlength{\textheight}{22cm}

\theoremstyle{plain}
\newtheorem{maintheorem}{Theorem}
\newtheorem{maincorollary}[maintheorem]{Corollary}
\newtheorem{theorem}{Theorem}[section]
\newtheorem{lemma}[theorem]{Lemma}
\newtheorem{proposition}[theorem]{Proposition}
\newtheorem{corollary}[theorem]{Corollary}

\newtheorem{definition}[theorem]{Definition}
\newtheorem{remark}[theorem]{Remark}
\newtheorem{example}[theorem]{Example}

\theoremstyle{definition}

\DeclareMathOperator{\IFS}{IFS} \DeclareMathOperator{\diam}{diam}
\newcommand{\eqdef}{\stackrel{\scriptscriptstyle\rm def}{=}}

%\renewcommand{\thesubfigure}{}
%\renewcommand{\theenumi}{\roman{enumi}}
%\renewcommand{\labelenumi}{\theenumi)}
%\renewcommand{\theenumii}{\alph{enumii}}
%\renewcommand{\labelenumii}{\theenumii)}
%\renewcommand{\theclaim}{\thethm.\arabic{claim}}

% format of tableofcontents
\setcounter{tocdepth}{3}
\let\oldtocsection=\tocsection
\let\oldtocsubsection=\tocsubsection
\renewcommand{\tocsection}[2]{\hspace{0em}\bf\oldtocsection{#1}{#2}}
\renewcommand{\tocsubsection}[2]{\hspace{1em}\oldtocsubsection{#1}{#2}}
\let\oldtocsubsubsection=\tocsubsubsection
\renewcommand{\tocsubsubsection}[2]{\hspace{2em}\oldtocsubsubsection{#1}{#2}}

\begin{document}

\title{Expanding actions: Minimality and Ergodicity}

%\titlerunning{Short form of title}        % if too long for running head

\author[Barrientos]{Pablo G. Barrientos}
\address{\footnotesize \centerline{Instituto de Matem\'atica e Estat\'istica, UFF}
   \centerline{Rua M\'ario Santos Braga s/n - Campus Valonguinhos, Niter\'oi,  Brazil}}
\email{barrientos@id.uff.br}

\author[Fakari]{Abbas Fakhari}
\address{\footnotesize \centerline{Department of Mathematics, Shahid
Beheshti University} \centerline{G.C.Tehran 19839, Tehran, Iran}}
\email{a\_fakhari@sbu.ac.ir}

\author[Malicet]{\\ Dominique Malicet}
\address{\footnotesize \centerline{Departamento de Matem\'atica PUC-Rio}
    \centerline{Marqu\^es de S\~ao Vicente 225, G\'avea, Rio de Janeiro 225453-900, Brazil}}
\email{malicet@mat.puc-rio.br}

\author[Sarizadeh]{Aliasghar Sarizadeh}
\address{\footnotesize \centerline{Department of Mathematics, Ilam University}
\centerline{Ilam, Iran}} \email{a.sarizadeh@mail.ilam.ac.ir}

%\date{Received: date / Accepted: date}
\keywords{semigroup actions, iteration function systems, (robust)
ergodicity, (robust) minimality, expanding actions, blending
regions} \subjclass{37C05, 37C40, 37C85, 37H99}

\maketitle
\begin{abstract}
We prove that every expanding minimal semigroup action of $C^1$
diffeomorphisms of a compact manifold (resp.~$C^{1+\alpha}$
conformal) is robustly minimal (resp.~ergodic with respect to
Lebesgue measure). We also show how, locally, a blending region
yields the robustness of the minimality and implies ergodicity.
%We also obtain results about the equivalence to the Lebesgue measure
%and fiberwise density of a
%stationary measure.
% \PACS{PACS code1 \and PACS code2 \and more}
\end{abstract}

\vspace{1cm}

%\tableofcontents

\section{Introduction}
Minimality and ergodicity are two popular themes in dynamical
systems. Minimality can be thought of as a property involving some
complexity for the orbits of an action coming from the fact that
the action is irreducible from topological point of view.
%A dynamical system is said to be \emph{minimal} if every closed invariant subset
%is either empty or coincides with the whole space.
From  probabilistic point of view, the
counterpart to this notion corresponds to ergodicity.  %Equivalently, if
%every point has dense orbit.  is ergodicity.
%An invariant probability measure $\mu$ is called \emph{ergodic} if
%every invariant measurable set is either of zero or full
%$\mu$-measure.
%Actually, ergodicity concerns the behavior of almost every point,
%and not of all the points.
It is natural to ask to what extent the properties of ergodicity
and minimality are related.
%A difeomorphism
%of a compact  manifold always leaves the Riemannian volume (also
%called the Lebesgue measure) quasi-invariant, and one can ask if a
%given diffeomorphism is ergodic with respect to the Lebesgue measure
%(below ergodic or Lebesgue-ergodic for short) or not.
In general, ergodicity does not imply minimality. Indeed, one can
easily construct examples of ergodic group actions having a global
fixed points. A natural question arises in the opposite direction:
\begin{quotation}
\begin{center}
 {\it Under which conditions a smooth minimal action on
a compact manifold is Lebesgue-ergodic?}
\end{center}
\end{quotation}
The example of Furstenberg~\cite{F61} shows that minimality does
not imply ergodicity in general,. Namely, Furstenberg constructs a
minimal analytic diffeomorphism of $\mathbb{T}^2$ preserving the
Lebesgue measure but not ergodic. In the case of unit circle, a
classical theorem by Katok and Herman states that a
$C^1$-diffeomorphism with derivative of bounded variation is
ergodic provided its rotation number is irrational (see
\cite{Ka95,H75}). Contrarily in~\cite{OR01}, the authors
constructed minimal $C^1$-diffeomorphisms on the unit circle which
are not ergodic.

The concept of expanding action was initially conceived to study
the ergodicity of the circle actions. In light of the rich
consequences for expanding action on the unit circle (see,
\cite{Na04,Hu}), it is natural to consider an extended notion. We
start with a precise definition of this extension. %in general case.
\begin{definition}
An action of a semigroup $\Gamma$ of diffeomorphims of compact
manifold $M$ is \emph{expanding} if for every $z\in M$ there is
$g\in \Gamma$ such that $m(Dg^{-1}(z))>1$, where
$m(T)=\|T^{-1}\|^{-1}$ denotes the co-norm of a linear
transformation $T$. %, i.e., $m(T)=\|T^{-1}\|^{-1}$.
%$$
%\text{for every $z\in M$ there is $g\in \Gamma$ such that
%$m(Dg^{-1}(z))>1$,}
%$$
%where $m(T)=\|T^{-1}\|^{-1}$ denote the co-norm of a linear
%transformation $T$. %, i.e., $m(T)=\|T^{-1}\|^{-1}$.
\end{definition}

Notice that by a very simple compactness type argument one can
show that the expanding property is \emph{robust}, i.e., persists
under the perturbation of the generators. This work is devoted to
prove the robust minimality of minimal expanding action and the
ergodicity of  conformal action with respect to the
Lebesgue measure.

Minimality and ergodicity should state that the orbit of every
non-trivial set, from the topological and the measure theoretical
point of view, fills most of the space. In order to introduce
ergodicity and minimality for group and semigroup actions we
should first deal with the meaning of \emph{invariant set}. %This
%depends strongly on the dynamical systemize are working with.
Observe that the orbits are both forward and backward invariant
under the group actions while they are only forward invariant in
the case of semigroup actions. According to this observation, we
say that a set $A\subset M$ is \emph{(totally) invariant}  for a
group action, generated by a family $\mathcal{F}$, if $f(A)=A$,
for all $f\in \mathcal{F}$. However, we need to slightly change
the definition in the case of semigroup action. We say that $A$ is
\emph{(forward) invariant} for the semigroup action, generated by
$\mathcal{F}$, if $f(A)\subset A$, for all $f\in \mathcal{F}$.
Nevertheless, we can unify both saying that $A$ is
\emph{$\Gamma$-invariant} if $f(A)\subset A$ for all $f\in\Gamma$
where $\Gamma$ denotes the group/semigroup generated by
$\mathcal{F}$. Since every group can be seen as a semigroup
choosing an appropriate set of generators, in what follows we will
assume that $\Gamma$ is a semigroup.

%In this paper we will discuss the expanding property ranging from
%the definition above to two main theorems below and their
%consequences. To formalize our results, we will first define the
%notion of minimality.

The action of $\Gamma$ on $M$ is {\it minimal} if every closed
$\Gamma$-invariant set is either empty or coincides with the whole
of manifold. If $\Gamma$ is finitely generated, {\it robust
minimality} means that the minimality of the action does not
disappear perturbing the initial generators. There are several
examples of robustly minimal actions: in dimension one by
\cite{GI00} and by \cite{GHS10,HN13} for every boundaryless
compact manifold. In our first theorem, we identify the expanding
property as an important feature in determining the robust
minilality.

%Expansive actions of $\mathbb{Z}$ on compact Riemannian manifold $M$
%are well-understood area of dynamics \cite{38,30}. For example, it is
%simple to show that the topological entropy of an expansive
%diffeomorphism must be positive. We are now ready to state the first result of this paper.
\begin{maintheorem}
\label{thmA} Every expanding minimal finitely generated semigroup
action of $C^{1}$ diffeomorphisms on a compact manifold is
$C^1$-robustly minimal.
\end{maintheorem}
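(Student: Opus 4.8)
The plan is to reformulate minimality in terms of preimages and then propagate it under perturbation in two stages, using the expanding hypothesis only to handle the small scales. Recall that minimality of $\Gamma$ is equivalent to the statement that $\bigcup_{g\in\Gamma}g^{-1}(U)=M$ for every nonempty open $U\subseteq M$, equivalently that every $\Gamma$-orbit is dense. The obstruction to a naive perturbation argument is that, as $U$ shrinks, one may be forced to use words $g$ of unbounded length, and for a long word $g$ the corresponding word $\tilde g$ in the perturbed generators is in general $C^0$-far from $g$ (the error accumulates with the length); already a single minimal circle diffeomorphism, perturbed to one having a periodic orbit, shows that minimality of a finitely generated action need not be robust without further hypotheses. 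The role of the expanding property is precisely to restore a uniform scale, so that the required size of the perturbation becomes independent of the scale of $U$.

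First I would extract a robust finite family of expanding elements. Using the expanding property, the continuity of $w\mapsto m(Dg^{-1}(w))$, compactness of $M$ and a Lebesgue-number argument, there are finitely many $w_1,\dots,w_N\in\Gamma$, a constant $\lambda>1$ and a scale $\epsilon_0>0$ such that for every $z\in M$ some index $i$ satisfies $m\big(Dw_i^{-1}(\cdot)\big)\ge\lambda$ on $\overline{B(z,\epsilon_0)}$; equivalently $w_i^{-1}(B(z,\rho))\supseteq B\big(w_i^{-1}(z),\lambda\rho\big)$ for all $\rho\le\epsilon_0$. Since the $w_i$ are fixed words, the maps $\tilde w_i^{-1}$ depend continuously, in $C^1$, on the generators, so after lowering $\lambda$ to some $\lambda'\in(1,\lambda)$ this property persists: there is $\delta_1>0$ such that for every $\delta_1$-perturbation and every $z\in M$, some $\tilde w_i^{-1}$ is a $\lambda'$-expansion on $\overline{B(z,\epsilon_0)}$. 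The point to stress is that $\delta_1$ does not depend on how many times these maps will later be composed, because $\lambda'$-expansion on balls of radius $\le\epsilon_0$ is a $C^1$-open, self-reproducing condition quantified uniformly over $M$. Iterating it, starting from an arbitrary ball $B(z,\rho_0)$ with $0<\rho_0\le\epsilon_0$ and applying at most $\lceil\log_{\lambda'}(\epsilon_0/\rho_0)\rceil$ of the maps $\tilde w_i^{-1}$ (choosing at each step an index adapted to the current center), one produces $\tilde g\in\tilde\Gamma$ with $\tilde g^{-1}(B(z,\rho_0))\supseteq B(z',\epsilon_0)$ for some $z'\in M$: a ``zoom-out'' to the fixed scale $\epsilon_0$, valid for every $\delta_1$-perturbation.

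The second ingredient is a robust covering at the fixed scale $\epsilon_0$, and here plain minimality of $\Gamma$ suffices. For each $z'\in M$ the sets $g^{-1}(B(z',\epsilon_0/2))$, $g\in\Gamma$, cover $M$, hence finitely many of them do; the same finite subfamily covers $M$ using the larger balls $B(z'',\epsilon_0)$ for all $z''$ close to $z'$, and covering $M$ by finitely many such neighborhoods produces a single finite set $F\subseteq\Gamma$ with $\bigcup_{g\in F}g^{-1}(B(z'',\epsilon_0/2))=M$ for every $z''\in M$. Because $F$ is finite, its elements have uniformly bounded word length, so $\tilde g\to g$ uniformly over $g\in F$ as the perturbation size tends to $0$; hence there is $\delta_2>0$ such that for every $\delta_2$-perturbation one has $g^{-1}(B(z'',\epsilon_0/2))\subseteq\tilde g^{-1}(B(z'',\epsilon_0))$ for all $g\in F$ and all $z''$, and therefore $\bigcup_{g\in F}\tilde g^{-1}(B(z'',\epsilon_0))=M$ for every $z''\in M$.

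Finally, set $\delta=\min(\delta_1,\delta_2)$. Given any $\delta$-perturbation, any nonempty open $U$ and any $x\in M$, pick a ball $B(z,\rho_0)\subseteq U$ with $\rho_0\le\epsilon_0$; the zoom-out gives $\tilde g_1\in\tilde\Gamma$ with $\tilde g_1^{-1}(B(z,\rho_0))\supseteq B(z',\epsilon_0)$, and the fixed-scale covering gives $\tilde g_2\in\tilde\Gamma$ with $x\in\tilde g_2^{-1}(B(z',\epsilon_0))$; composing, $x\in\tilde g_2^{-1}\big(\tilde g_1^{-1}(B(z,\rho_0))\big)=(\tilde g_1\tilde g_2)^{-1}(B(z,\rho_0))\subseteq(\tilde g_1\tilde g_2)^{-1}(U)$, so $(\tilde g_1\tilde g_2)(x)\in U$ with $\tilde g_1\tilde g_2\in\tilde\Gamma$. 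Thus every $\tilde\Gamma$-orbit is dense and $\tilde\Gamma$ is minimal. I expect the main obstacle to be exactly the one isolated in the first paragraph — producing a single $\delta$ valid for all open sets simultaneously — and the whole function of the expanding hypothesis is to decouple $\delta$ from the scale of $U$ through the robust zoom-out, leaving the fixed-scale covering to depend only on the finitely many, uniformly short words in $F$.
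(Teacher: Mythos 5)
Your proposal is correct and rests on the same two pillars as the paper's own proof of Theorem~\ref{thmA} (via Theorem~\ref{thmA-generalize}): a robust finite family of words with uniform expansion at a definite scale, obtained exactly as in Lemma~\ref{lem1-cpt}, together with a robust fixed-scale covering extracted from minimality using only finitely many words. The only difference is one of direction — you pull the small target ball backward under the expanding inverses until it reaches the fixed scale $\epsilon_0$ and then apply the covering, whereas the paper pushes the orbit forward by the corresponding contractions $h_i$, improving its density scale by a factor $\kappa^{-1}$ at each step (and the connectedness argument the paper uses for the claim $h_i(B(h_i^{-1}(y),\kappa r))\subset B(y,r)$ is precisely what justifies your ``equivalently'' in the inclusion $w_i^{-1}(B(z,\rho))\supseteq B(w_i^{-1}(z),\lambda\rho)$).
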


In fact, we will prove that any expanding semigroup of
diffeomorphisms (not necessarily finitely generated) acting
minimally has a finitely generated sub-semigroup whose action is
also expanding and robustly minimal (see
Theorem~\ref{thmA-generalize}).

The second main result is due to the ergodicity of minimal
actions. The definition of ergodicity can be naturally extended to
a \emph{quasi-invariant} measure which is measure whose
push-forward is absolutely continuous with respect to itself.
Here, we focus on the Lebesgue measure which is quasi-invariant for
$C^1$ (local) diffeomorphisms. The action of $\Gamma$ is
\emph{ergodic} with respect to a quasi-invariant probability
measure $\mu$  if $\mu(A)\in \{0,1\}$, for all $\Gamma$-invariant
set $A\subset M$. Since the $C^1$-regularity is not sufficient to
conclude the ergodicity from the minimality (\cite{OR01}), the majority of the results,
obtained up to now, are in the context of $C^{1+\alpha}$-regularity. %diffeomorphisms ($C^{1}$
%diffeomorphisms with $\alpha$-H\"older derivatives with
%$\alpha>0$).
For instance, following essential idea
of~\cite{SS85}, Navas proved in \cite{Na04} that every expanding
minimal action of a group of $C^{1+\alpha}$ diffeomorphisms of the
unit circle is Lebesgue-ergodic. This is the main motivation behind the
next theorem. We extend the result for semigroup actions in any
compact Riemannian manifold assuming conformality.

Recall that a diffeomorphism $g$ of compact Riemannian manifold $M$
is a \emph{conformal} map if there exists a function $a:M\to \mathbb{R}$ such that for all $x
\in M$ we have that $Dg(x)= a(x) \, \mathrm{Isom}(x)$, where
$\mathrm{Isom}(x)$ denotes an isometry of $T_xM$. Clearly,
$a(x)=\|Dg(x)\|=m(Dg(x))$, for all $x\in M$.

\begin{maintheorem}
\label{thmB} Every expanding minimal semigroup action of
$C^{1+\alpha}$ conformal diffeomorphisms of a compact Riemannian manifold is
ergodic with respect to the Lebesgue measure.
\end{maintheorem}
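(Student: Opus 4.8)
The plan is to adapt the classical Sacksteder–Sullivan–Navas argument to the conformal semigroup setting. First I would fix a $\Gamma$-invariant measurable set $A \subset M$ with $\mathrm{Leb}(A) > 0$ and aim to show $\mathrm{Leb}(A) = 1$; equivalently, using a density-point argument, I want to show that the complement has no density points. Let $x$ be a Lebesgue density point of $A$. The expanding hypothesis, combined with minimality, should let me build, for \emph{any} target point $y \in M$, a sequence $g_n \in \Gamma$ and points $x_n \to y$ such that $g_n$ maps a definite-size ball around $x_n$ close to $x$ while expanding: the key is to track compositions whose co-norm along the relevant orbit grows, so that (thanks to conformality) $g_n$ acts on a small ball around $x_n$ like a near-homothety of bounded eccentricity onto a ball of macroscopic radius around a point near $x$. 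This is the standard "zooming" or "expansion from a density point" mechanism: one uses the chain rule for the conformal factor $a(\cdot)$ together with the $\alpha$-Hölder control on $\log a$ to get bounded distortion for the composition on the small ball, i.e. $\mathrm{diam}$-ratios and density-ratios are distorted by at most a universal constant.

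Concretely, I would first establish a \textbf{bounded distortion lemma}: if $g = g_{i_k} \circ \cdots \circ g_{i_1} \in \Gamma$ and $B$ is a ball such that the orbit pieces $g_{i_1}(B), g_{i_2 i_1}(B), \dots$ all have diameter bounded by some $\delta_0$, then $\sup_B a_g / \inf_B a_g \le C_0$ where $a_g$ is the conformal factor of $g$ and $C_0$ depends only on the generators and the Hölder data — this is where $C^{1+\alpha}$ and conformality are essential, via $\sum (\mathrm{diam}\ g_{\text{partial}}(B))^\alpha$ being dominated by a geometric series once diameters shrink geometrically. Then I would use the expanding property to produce, starting from small balls around arbitrary points, compositions $g$ that expand them to diameter $\approx \delta_0$ (a "first time the orbit reaches macroscopic size"), apply bounded distortion to conclude such $g$ sends the small ball to a set of bounded eccentricity and uniformly positive measure proportion comparison with a fixed ball. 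Invariance of $A$ then transfers the high density of $A$ at $x$ to high density of $A$ near the image, and by minimality the images can be steered to be $\delta_1$-dense in $M$ for a uniform $\delta_1$; letting the initial balls shrink and invoking Lebesgue density at $x$ forces the density of $A$ to be arbitrarily close to $1$ on a $\delta_1$-net of balls covering $M$, hence $\mathrm{Leb}(A)=1$.

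The main obstacle, and the place where the semigroup (as opposed to group) setting bites, is the \emph{direction} of the dynamics: a $\Gamma$-invariant set satisfies $f(A)\subset A$, not $f(A)=A$, so I must push density \emph{forward} along $\Gamma$, which means I need expanding elements to be applied to neighborhoods of the \emph{target} points and must control the pullback geometry — concretely, I need the expansion estimates to hold on balls centered at points I do not get to choose freely, only up to the density point $x$ being hit approximately. Making the "reach macroscopic size with bounded distortion" step uniform over all basepoints — i.e., getting a single scale $\delta_0$ and constant $C_0$ that work simultaneously, using compactness of $M$ and the robust/uniform form of the expanding condition — is the crux; once that uniformity is in hand, the measure-theoretic conclusion follows from a routine Vitali/density-point argument. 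I would also need to check that minimality gives the steering with a \emph{uniform} $\delta_1$, which again follows from compactness: the family of orbit maps is equicontinuous-free but the minimal set condition plus a finite subcover handles it.
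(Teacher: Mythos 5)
Your overall architecture (Hölder/conformal bounded distortion for compositions, ``expand a small ball around a density point until it reaches a macroscopic scale,'' then spread by minimality) is exactly the skeleton of the paper's proof, and your bounded distortion lemma is essentially the paper's Lemma~\ref{prop-dist-2}. But there is a genuine gap in the step you yourself flag as the crux, and it is not resolved by ``controlling the pullback geometry.'' The expanding hypothesis is a statement about the \emph{inverses}: $m(Dg^{-1}(z))>1$ means $g^{-1}$ expands at $z$, equivalently $g$ contracts at $g^{-1}(z)$. So the only maps available to blow a small ball up to macroscopic size are compositions $h=g^{-1}$ of inverse generators, which do \emph{not} belong to the semigroup $\Gamma$. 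Now check the direction of the density transfer. For the forward-invariant set $A$ one has $g(B\setminus A)\supset g(B)\setminus A$ for $g\in\Gamma$, so density of $A$ pushes forward under $\Gamma$ --- but those maps are contracting, so you never reach scale $\delta_0$. For the expanding maps $h=g^{-1}$ one only gets $h(B\setminus A)\subset h(B)\setminus A$ (since $g^{-1}(A)\supset A$, not $\subset A$), which is an inequality in the useless direction: high density of $A$ on $B$ gives no lower bound on the density of $A$ on $h(B)$. So starting from a density point of $A$, as you propose, the zooming mechanism and the invariance are incompatible, and the argument does not close.

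The missing idea is to run the whole argument on the \emph{complement}. Since $g(A)\subset A$ gives $g^{-1}(A^c)\subset A^c$, the set $A^c$ is forward invariant for the backward semigroup generated by the inverses $g_i^{-1}$ --- precisely the maps that are expanding. This is what the paper does (Proposition~\ref{prop:bola}): assume $\lambda(A^c)>0$, take a Lebesgue density point $x_0$ of $A^c$, expand $B(x_0,\delta)$ by compositions $h=h_{\omega_n}\circ\dots\circ h_{\omega_1}$ of inverse generators staying in the charts of the expanding cover until the image reaches the Lebesgue number of the cover; conformality plus bounded distortion then yield a ball $B(x,r)$ of \emph{fixed} radius with $\lambda(B(x,r)\setminus A^c)=0$. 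The final spreading step also uses the backward direction: minimality gives a backward cycle $M=T_1^{-1}(B(x,r))\cup\dots\cup T_m^{-1}(B(x,r))$ (Lemma~\ref{preliminar}), and $T_i^{-1}(B(x,r))\setminus A^c\subset T_i^{-1}(B(x,r)\setminus A^c)$ together with quasi-invariance of Lebesgue gives $\lambda(A^c)=1$, i.e.\ $\lambda(A)\in\{0,1\}$. Your proposal would become correct after systematically replacing ``density point of $A$ and push forward along $\Gamma$'' by ``density point of $A^c$ and push forward along the inverse maps,'' but as written the key inclusion goes the wrong way.
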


%As it well know, Luioville's theorem and it generalizations
%severely limits the variety of possible (smooth) conformal
%mappings on any conformal manifold of dimension $d\geq 3$.
Notice that in one dimensional case any diffeomorphism is conformal.
Hence combining Theorems~\ref{thmA} and~\ref{thmB} we get the following.

\begin{maincorollary}
Every expanding minimal finitely generated semigroup action of
$C^{1+\alpha}$ diffeomorphisms of the circle is
$C^{1+\alpha}$-robustly ergodic with respect to the
Lebesgue measure.
\end{maincorollary}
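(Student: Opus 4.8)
The plan is to deduce the statement from Theorems~\ref{thmA} and~\ref{thmB}, together with the observation, already noted in the text, that in dimension one every $C^1$ diffeomorphism is conformal. Let $\mathcal{F}=\{f_1,\dots,f_k\}$ be a finite set of $C^{1+\alpha}$ diffeomorphisms of the circle $S^1$ generating a semigroup $\Gamma$ whose action is expanding and minimal. Since each $f_i$ is conformal (with conformal factor $a_i(x)=|f_i'(x)|$), Theorem~\ref{thmB} already yields that the action of $\Gamma$ is ergodic with respect to Lebesgue measure. What has to be added is that this persists for the semigroup generated by any sufficiently $C^{1+\alpha}$-close family $\tilde{\mathcal{F}}=\{\tilde f_1,\dots,\tilde f_k\}$.

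To see this, first note that a $C^{1+\alpha}$-small perturbation of $\mathcal{F}$ is in particular a $C^1$-small perturbation. Hence, by the compactness argument mentioned right after the definition of expanding action, the action of the semigroup $\tilde\Gamma$ generated by $\tilde{\mathcal{F}}$ is still expanding once the perturbation is small enough; and by Theorem~\ref{thmA}, applied in the $C^1$ topology, the action of $\tilde\Gamma$ is still minimal. On the other hand, each $\tilde f_i$ is again a $C^{1+\alpha}$ diffeomorphism of $S^1$, hence again conformal. Thus the hypotheses of Theorem~\ref{thmB} are satisfied by the action of $\tilde\Gamma$, so that action is ergodic with respect to Lebesgue measure. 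Since this holds for all $\tilde{\mathcal{F}}$ in a $C^{1+\alpha}$-neighbourhood of $\mathcal{F}$, the original action is $C^{1+\alpha}$-robustly ergodic, as claimed.

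There is no genuine obstacle here beyond bookkeeping: the only point to be careful about is that the three ingredients must apply to one and the same perturbation. The robustness of the expanding property and the robust minimality of Theorem~\ref{thmA} only require $C^1$ control, which is implied by $C^{1+\alpha}$-closeness; and the $C^{1+\alpha}$ regularity of the perturbed generators — preserved precisely because we perturb within the class of $C^{1+\alpha}$ diffeomorphisms — is exactly what is needed to feed them into Theorem~\ref{thmB}. (If desired, the more general Theorem~\ref{thmA-generalize} can replace Theorem~\ref{thmA}, though it is not needed in the finitely generated setting.)
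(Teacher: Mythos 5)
Your argument is correct and is essentially the paper's own (the paper derives this corollary in one line from Theorem~\ref{thmA}, Theorem~\ref{thmB}, and the fact that one-dimensional diffeomorphisms are conformal). One minor remark: the paper's notion of $C^{1+\alpha}$-robustness only asks for $C^1$-smallness of the perturbation among generators that remain $C^{1+\alpha}$, which is slightly more general than perturbing in the $C^{1+\alpha}$-topology as you do; your chain of reasoning covers that case verbatim, since the only places regularity enters are the $C^1$-smallness (for robust expansion and Theorem~\ref{thmA}) and the H\"older regularity of the perturbed maps (for Theorem~\ref{thmB}).
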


By \emph{$C^{1+\alpha}$-robust} ergodicity we mean that this
ergodicity property persists under $C^{1}$-pertur\-ba\-tion among
the generators with H\"older continuous derivative.  A completely
non-trivial example of a persistent ergodic action in higher
dimension can be found in~\cite{DK07}. There, the authors show
that if the rotation $R_1,\ldots,R_m$ generate
$\mathbb{SO}_{d+1}$, for even number $d$, then their action on the
sphere $\mathbb{S}^d$ is \emph{stable} ergodic with respect to the
Lebesgue measure. That is, if $f_i$ are $C^\infty$ diffeomorphisms
sufficiently close to $R_i$ and \emph{preserve} the Lebesgue
measure then the action generated by $f_i$ is also ergodic.

In this paper we identify a local feature, called  \emph{blending region}, allowing us to get robustness of
minimality and ergodicity (see Theorem~\ref{pro2}). As an
application, we get, for the first time as far as
we know, examples of $C^{1+\alpha}$-robustly Lebesgue-ergodic
semigroup actions in any surface.

\begin{maincorollary}
\label{rem:ergodic-surface} Any boundaryless compact surface
admits a $C^{1+\alpha}$-robustly minimal Lebesgue-ergodic
%with respect to Lebesgue
finitely generated semigroup action whose
generators are not necessarily conformal maps.
\end{maincorollary}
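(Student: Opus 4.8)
The strategy is to build, on an arbitrary boundaryless compact surface $S$, a finitely generated semigroup action that is simultaneously robustly minimal and carries a blending region whose local maps are, in a chart, honest homotheties, and then to invoke Theorem~\ref{pro2}. The point is that such a blending region plays, locally, the role that global conformality plays in Theorem~\ref{thmB}: it supplies precisely the overlapping iterated‑function‑system structure (with its built‑in distortion control) that the ergodicity argument needs, leaving the remaining generators free to be arbitrary diffeomorphisms.

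Concretely, I would first invoke \cite{GHS10} to fix a finite set $\mathcal F_{0}\subset\mathrm{Diff}^{\infty}(S)$ generating a $C^{1}$‑robustly minimal action (regarding the group it produces, if that is the case, as a semigroup with a symmetric generating set). Next I would produce the blending region by hand: fix an embedded disk $D\subset S$ identified through a chart with the Euclidean ball $B(0,1)$, choose $\rho\in(\frac{1}{2},1)$ close to $1$, and choose finitely many vectors $t_{1},\dots,t_{k}$ forming a sufficiently fine net of the concentric ball $B(0,1-\rho)$. The affine contractions $g_{j}\colon x\mapsto\rho x+t_{j}$ map $B(0,1)$ into itself, and, writing $D_{0}$ for a slightly smaller concentric disk, one has $g_{j}(\overline{D_{0}})\subset D$ for every $j$ while $\overline{D_{0}}\subseteq\bigcup_{j}g_{j}(D)$ with ample overlap — a blending region in the sense used in Theorem~\ref{pro2}, and, all the defining inclusions being open conditions, a robust one. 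Transporting the $g_{j}$ to $S$ through the chart and extending each by the identity outside $D$ via a smooth cutoff produces $\psi_{1},\dots,\psi_{k}\in\mathrm{Diff}^{\infty}(S)$ which coincide with honest homotheties on $D_{0}$ and realize this blending region.

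Then, setting $\mathcal F=\mathcal F_{0}\cup\{\psi_{1},\dots,\psi_{k}\}$ and $\Gamma=\langle\mathcal F\rangle$, I would check that $\Gamma$ inherits all that is needed. Adjoining generators cannot destroy minimality, since a closed $\Gamma$‑invariant set is in particular invariant under $\langle\mathcal F_{0}\rangle$ and hence empty or all of $S$; because this reasoning only uses the robust minimality of $\langle\mathcal F_{0}\rangle$, it persists under every $C^{1}$‑small perturbation of $\mathcal F$, so $\Gamma$ is $C^{1}$‑robustly minimal. As $\Gamma$ also carries the robust blending region built above, Theorem~\ref{pro2} applies and yields that the action of $\Gamma$ is (robustly) minimal and ergodic with respect to Lebesgue measure; since the elements of $\mathcal F_{0}$, and the parts of the $\psi_{j}$ outside $D$, are arbitrary diffeomorphisms, the generators are indeed not required to be conformal, which is the assertion.

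The one place that deserves care is purely bookkeeping: verifying that the explicit homothety system above meets the formal definition of a blending region employed in Theorem~\ref{pro2} — the covering/superposition property together with whatever contraction and bounded‑distortion conditions it incorporates — and that the cutoff extension does not spoil it on $D_{0}$. Once this is in place, everything else (stability of minimality under adjunction of generators, the compactness behind the robustness of the blending region, and the spreading of a full‑measure invariant set from $D_{0}$ to all of $S$ by minimality) is soft and is absorbed into the statement of Theorem~\ref{pro2}.
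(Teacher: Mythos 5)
Your overall route---graft a hand-built blending region onto a robustly minimal skeleton from \cite{GHS10} and feed the result to Theorem~\ref{pro2}---is the paper's route, but your choice of local model destroys the one property this corollary is really advertising, namely the $C^{1+\alpha}$-\emph{robustness} of the ergodicity. Theorem~\ref{pro2} only guarantees that ergodicity persists under perturbations \emph{for which $B$ remains a BDV-regular blending region}, and for your homotheties $g_j(x)=\rho x+t_j$ this is not an open condition in dimension two. A $C^{1+\alpha}$-small perturbation splits the double real eigenvalue $\rho$ of $Dg_j$ into two distinct real eigenvalues $\rho_1>\rho_2$ (take, say, $x\mapsto \mathrm{diag}(\rho_1,\rho_2)x+t_j$ on $D_0$); a composition $h$ of $n$ such perturbed contractions then sends the attractor $\triangle$ (which has nonempty interior, since $B\subset\triangle$) to a set of diameter comparable to $\rho_1^{\,n}$ and of Lebesgue measure comparable to $(\rho_1\rho_2)^n$, so $(\diam h(\triangle))^2/\lambda(h(\triangle))\gtrsim(\rho_1/\rho_2)^n\to\infty$ along exactly the long words that any Vitali cover by sets of small diameter must use. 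The perturbed IFS is therefore not Vitali-regular, Theorem~\ref{pro2} says nothing about the perturbed action, and you obtain an ergodic action but not a $C^{1+\alpha}$-robustly ergodic one. This is precisely why the paper insists on contractions with \emph{complex, non-real} eigenvalues: non-reality of the eigenvalues is a $C^1$-open condition, and a planar $C^{1+\alpha}$ contraction with non-real eigenvalues is conformal in adapted $C^{1+\alpha}$ coordinates, so BDV-regularity of the blending region survives perturbation. Replacing your $g_j$ by maps of the form $x\mapsto\rho R_\theta(x-c_j)+c_j$ with $\theta\notin\{0,\pi\}$ repairs the construction and brings it back in line with the paper's argument.

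Two smaller points. Theorem~\ref{pro2} also requires the blending region to have a cycle, $M=\bigcup_i T_i^{-1}(B)=\bigcup_j S_j(B)$; the backward half follows from minimality via Lemma~\ref{preliminar}, but the forward half needs backward minimality, so you must actually take $\mathcal F_0$ closed under inverses rather than merely remark that this is possible. And a diffeomorphism of $S$ that is the identity outside $D$ cannot agree with $g_j$ on all of $D$ (it would fail to be surjective onto $D$); you should invoke the isotopy extension theorem to realize $g_j|_{D_0}$ by an ambient diffeomorphism supported in $D$---harmless, but the cutoff as you describe it is not a diffeomorphism.
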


\vspace{0.5cm} \noindent {\bf This work is organized as follows:}
In Section~\ref{Sec-2} we compare the, a priori, different notions
of mini\-ma\-li\-ty (resp.~ergodicity) in the case of one
generator. In Section~\ref{Sec-minimality}, we prove
Theorem~\ref{thmA}. The proof of Theorem~\ref{thmB} is handled
in Section~\ref{ss:proofB}. %Previously in
%Subsection~\ref{Sec-uniform-expanding}, we study the ergodicity
%of random uniformly expanding semigroups actions of local
%diffeomorphisms.
The rest of the paper is dedicated to the study of local
features allowing us to construct new classes of ergodic expanding minimal actions.

\section{General observations in the case of one generator}
\label{Sec-2}
%a single diffeomorphisms.
%A weaker notion than the minimality is the transitivity. The  action of $\Gamma$ is \emph{topologically transitive} if for every pair of non-empty open set $U, V$ in $M$ there exits $g\in\Gamma$ such that $U\cap g^{-1}(V)\not=\emptyset$. Equivalently if there exists a point with dense orbit.
%Minimal actions are transitive. Even, observe that every  transitive semigroup action of homeomorphisms of $M$ is forward transitive (i.e., the semigroup generated by the family of inverse maps is also transitive).
%
%In order to introduce ergodicity we recall that the notion of quasi-invariant measure. A probability measure $\mu$ is called \emph{quasi-invariant} for $\Gamma$ if $f_\ast \mu$ is absolutely continuous with respect to $\mu$ for all $f\in\Gamma$.
%
%\begin{definition}
%An action of a group (resp. of a semigroup) generated by a family $\mathcal{F}$ of homeomorphisms of $M$ is \emph{ergodic} with respect to a quasi-invariant measure $\mu$ if $\mu(A)\in \{0,1\}$ for
%all measurable $\mathcal{F}$-invariant (resp. forward $\mathcal{F}$-invariant) subset $A\subset M$.
%\end{definition}
%\subsection{}
Given a diffeomorphism $f$ of a compact manifold $M$, one can
study the iterations of the map from two points of view. The
first, consists to consider full orbits, i.e., forward and
backward iterations of $f$, called \emph{$\mathbb{Z}$-action} of
$f$. The second only considers forward iterations of $f$, called
\emph{$\mathbb{N}$-action} of $f$ (or \emph{cascade}). Since we
have introduced different notion of invariance for group and
semigroup actions, a priori, one could expect different type of minimality and
for the $\mathbb{Z}$-action
and the $\mathbb{N}$-action. However, it is
easy to prove the following: %Ergodic Theory and topological dynamics (Gerard Meurant)
\begin{proposition}
The $\mathbb{Z}$-action of $f$ is minimal if and only if the
$\mathbb{N}$-action of $f$ is minimal.
\end{proposition}
%\begin{proof}
%It is clear that the minimality of the cascade implies the
%minimality of the $\mathbb{Z}$-action. Reciprocally, by
%contradiction, suppose that the $\mathbb{Z}$-action of $f$ is
%minimal but not the cascade. Let $x$ be a point whose forward
%orbit is not dense. Since $M$ is a compact manifold, the
%$\omega$-limit of $x$ is a closed non-empty set invariant for the
%$\mathbb{Z}$-action. Since the forward orbit of $x$ is not dense
%then the $\omega$-limit of $x$ is different that the whole space.
%Then the $\mathbb{Z}$-action cannot be minimal contradicting our
%assumption. This conclude the proposition.
%\end{proof}
As in the case of minimality, the ergodicity of the
$\mathbb{N}$-action implies the ergodicity of the
$\mathbb{Z}$-action. In the case of invariant measures, one can
also get the converse. Recall that a measure $\mu$ on $M$ is said
to be \emph{invariant} under $f$ if $\mu(f^{-1}(A))=\mu(A)$ for
all measurable set $A\subset M$, that is $f_*\mu=\mu$, in terms of
the push-forward. The following proposition states that if $\mu$  is an invariant measure of $f$ then
$\mu(A)\in \{0,1\}$ for all measurable set so that $f(A)=A$
if and only if $\mu(A)\in \{0,1\}$ for all measurable set so that $f(A)\subset A$.

%The following proposition is a natural extension of a well known result in abstract ergodic theory (\cite{kaku,KIF86}).
\begin{proposition}
%\label{prop:erg}
Let $\mu$ be an invariant measure of $f$.
%for a
%diffeomorphism $f$ of a compact manifold $M$. Then
The $\mathbb{Z}$-action of $f$ is ergodic with respect  $\mu$ if
and only if the $\mathbb{N}$-action of $f$ is ergodic with respect
to $\mu$.
\end{proposition}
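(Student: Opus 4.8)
The plan is to use the fact that the totally invariant sets of the $\mathbb{Z}$-action form a subfamily of the forward-invariant sets of the cascade, together with an exhaustion trick that replaces a merely forward-invariant set by a genuinely invariant one of the same $\mu$-measure. The easy implication is immediate: if $A$ satisfies $f(A)=A$ then $f(A)\subseteq A$, so every set that is invariant for the $\mathbb{Z}$-action is invariant for the cascade; hence if the cascade is ergodic with respect to $\mu$, so is the $\mathbb{Z}$-action. (This is the direction already recorded in the text preceding the statement.)

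For the converse, I would assume the $\mathbb{Z}$-action is ergodic and take a measurable set $A\subseteq M$ with $f(A)\subseteq A$. Applying $f^{-1}$ repeatedly gives the increasing chain $A\subseteq f^{-1}(A)\subseteq f^{-2}(A)\subseteq\cdots$, and I would set $B\eqdef\bigcup_{n\geq 0}f^{-n}(A)$. A direct computation shows $f^{-1}(B)=\bigcup_{n\geq 1}f^{-n}(A)=B$, and since $f$ is a bijection this yields $f(B)=B$, so $B$ is totally invariant for the $\mathbb{Z}$-action. Ergodicity of the $\mathbb{Z}$-action then forces $\mu(B)\in\{0,1\}$.

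It remains to identify $\mu(A)$ with $\mu(B)$. Since $\mu$ is $f$-invariant one has $\mu(f^{-n}(A))=\mu(A)$ for every $n\geq 0$; as $(f^{-n}(A))_{n\geq 0}$ is an increasing sequence of sets with union $B$, continuity of the finite measure $\mu$ from below gives $\mu(B)=\lim_{n}\mu(f^{-n}(A))=\mu(A)$. Therefore $\mu(A)=\mu(B)\in\{0,1\}$, and since $A$ was an arbitrary forward-invariant set this shows the cascade of $f$ is ergodic with respect to $\mu$.

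I do not expect a genuine obstacle here: the only points needing care are keeping track of which notion of invariance applies on each side (total invariance $f(A)=A$ for the $\mathbb{Z}$-action versus forward invariance $f(A)\subseteq A$ for the cascade) and invoking continuity of $\mu$ from below for the exhaustion $f^{-n}(A)\uparrow B$. One could equally well run the argument with the decreasing intersection $\bigcap_{n\geq 0}f^{n}(A)$ and continuity from above, using $\mu(f(E))=\mu(E)$, which follows from invertibility of $f$ and $f$-invariance of $\mu$.
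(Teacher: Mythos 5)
Your proposal is correct and follows essentially the same route as the paper: both pass to the totally invariant set $\bigcup_{n\geq 0}f^{-n}(A)$, apply ergodicity of the $\mathbb{Z}$-action to it, and then use $f$-invariance of $\mu$ to conclude $\mu(A)\in\{0,1\}$ (the paper phrases the last step via the decreasing complements $f^{-n}(A^c)$ and continuity from above, which is the same computation as your continuity from below). No gaps.
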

\begin{proof}
It suffices to see that the ergodicity of the $\mathbb{Z}$-action
implies the ergodicity of the cascade. In order to do this, consider
a measurable set $A\subset M$ such that $f(A)\subset A$. We will
prove that $\mu(A)\in \{0,1\}$. Consider
$$
\Theta=\bigcup_{n=0}^\infty f^{-n}(A).
$$
Observe that $f(\Theta)=\Theta$. Indeed, by assumption  $A\subset
f^{-1}(A)$ and therefore, $f^{-1}(\Theta)=f^{-1}(A)\cup
f^{-2}(A)\cup \dots= A \cup f^{-1}(A) \cup f^{-2}(A) \cup
\dots=\Theta$. By the ergodicity of the $\mathbb{Z}$-action with
respect to $\mu$, $\mu(\Theta)\in \{0,1\}$. If $\mu(\Theta)=0$ then
$\mu(A)=0$. On the other hand, if $\mu(\Theta)=1$ then the measure
of
$$
M\setminus\Theta = \bigcap_{n=0}^\infty f^{-n}(A^c)
$$
is zero, where $A^c=M\setminus A$. Now, by the inclusion
$f^{-(n+1)}(A^c) \subset f^{-n}(A^c)$, for every $\varepsilon>0$
there exists $n_0\in \mathbb{N}$ such that
$\mu(f^{-n}(A^c))<\varepsilon$, for any $n\geq n_0$. By the
invariance of $\mu$, we get that $\mu(A^c)<\varepsilon$, for all
$\varepsilon>0$ and consequently $\mu(A)=1$. This concludes the
proof of the proposition.  \end{proof}

As far as the authors know, if $\mu$ is a quasi-invariant but not invariant
then the problem of equivalence between the ergodicity for
$\mathbb{Z}$-actions and for $\mathbb{N}$-action remains open. For the actions generated by more than one map, the
minimality/ergodicity  of the group action does not imply, in
general, the minimality/ergodicity of the semigroup action. A
simple example can be constructed as follows:
\begin{example}
Consider a pair of diffeomorphisms $f_0, f_1$ of the circle
$C^2$-close enough to rotations with zero rotation number, no
fixed points in common and with an $ss$-interval (compact interval
whose endpoints are consecutive attracting fixed points, one of
$f_0$ and one of $f_1$). According to Theorem~A and
Theorem~5.4 in~\cite{BR13}, the group action generated by these two maps
is minimal while the semigroup action cannot be minimal. In fact,
the $ss$-interval is a non-empty closed invariant set for the
semigroup action different of the circle $S^1$. Since this set has
positive but not full Lebesgue measure, the semigroup action
cannot be also Lebesgue-ergodic. However, according
to Theorem~D in~\cite{Na04}, the group action is ergodic with
respect to the Lebesgue measure (see also~\cite{BR13}).
\end{example}
%Finally, we would mention the meaning of backward ergodicty of a
%semigroup action. Let $\mu$ be a quasi-invariant measure for the
%action of the group generated by a family of diffeomorphisms
%$\mathcal{F}$. Observe that $\mu$ is also a quasi-invariant measure
%for both, the semigroup generated by $\mathcal{F}$ and by
%$\mathcal{F}^{-1}$. Here $\mathcal{F}^{-1}$ denotes the family of
%inverse maps of $\mathcal{F}$. The semigroup action generated by
%$\mathcal{F}$ is \emph{backward ergodic} respect to $\mu$ if the
%semigroup action generated by $\mathcal{F}^{-1}$ is ergodic.
%\begin{remark}
%If the semigroup action generated by $\mathcal{F}$ is ergodic with respect to a quase-invariant measure $\mu$ for the group action generated by $\mathcal{F}$ then it so is backward ergodic with respect to~$\mu$.
%\end{remark}
\section{Minimality of expanding actions}
\label{Sec-minimality}
%\subsection{Proof of Theorem~\ref{thmA}}
We get the proof of Theorem~\ref{thmA} as a corollary of the following
more general result.
\begin{theorem}
\label{thmA-generalize} Let  $\Gamma$ be a semigroup of $C^1$
diffeomorphisms acting on a compact manifold $M$. Assume that the action is expanding and minimal.
Then, there exists a finite set $\mathcal{G}\subset\Gamma$
such that the semigroup action generated by $\mathcal{G}$ is
$C^1$-robustly minimal.
\end{theorem}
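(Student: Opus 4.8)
The plan is to exploit the expanding hypothesis to produce, at every point, a local uniform expansion that is stable under perturbation, then combine this with minimality to extract a finite subfamily whose orbits cover the manifold with uniform "expansion budget". First I would use compactness: for each $z\in M$ pick $g_z\in\Gamma$ with $m(Dg_z^{-1}(z))>1$; by continuity of the derivative there is a neighborhood $U_z$ of $z$ and $\lambda>1$ such that $m(Dg_z^{-1}(w))>\lambda$ for all $w\in U_z$, and moreover this inequality is open in the $C^1$-topology on the (finitely many) generators appearing as factors of $g_z$. Extracting a finite subcover $U_{z_1},\dots,U_{z_k}$ and letting $\mathcal G_0$ be the finite set of generators from $\mathcal F$ used to write $g_{z_1},\dots,g_{z_k}$, we obtain a finitely generated sub-semigroup $\Gamma_0$ which is expanding, with a uniform expansion rate $\lambda>1$, and this remains true for all $C^1$-small perturbations of the generators in $\mathcal G_0$. (This is the "very simple compactness type argument" alluded to in the Introduction after the definition of expanding.)

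Next I would upgrade from expanding to a covering/minimality statement for a finite subfamily. Fix any nonempty open ball $B$. Using the uniform expansion: for any $x\in M$, iterating the maps $g_{z_i}$ appropriately, forward images of small balls grow at a uniform rate until they reach a definite size $\rho_0>0$ (a Lebesgue number type argument controlling distortion via the $C^1$ bounds is enough — no Hölder regularity is needed here since we only need a size, not a density). By minimality of the full action $\Gamma$, finitely many elements $h_1,\dots,h_m\in\Gamma$ already satisfy $\bigcup_j h_j^{-1}(B)\supset M$ — more precisely, since $\{h^{-1}(B):h\in\Gamma\}$ is an open cover of the compact set $M$, compactness gives a finite subcover. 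Let $\mathcal G_1$ be the finite set of generators appearing in $h_1,\dots,h_m$. Then $\mathcal G\eqdef\mathcal G_0\cup\mathcal G_1$ is the desired finite family: the semigroup it generates contains all the $g_{z_i}$ and all the $h_j$, so it is expanding with uniform rate and for every $x\in M$ some word in $\mathcal G$ sends a neighborhood of $x$ onto a set meeting $B$.

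To close the argument and get robustness, I would now perturb. Let $\widetilde{\mathcal G}$ be a $C^1$-small perturbation of $\mathcal G$ and $\widetilde\Gamma$ the semigroup it generates; we must show $\widetilde\Gamma$ is minimal, i.e.\ for every $x\in M$ and every ball $B$ the $\widetilde\Gamma$-orbit of $x$ meets $B$. Shrinking the perturbation if necessary, the perturbed generators of $\mathcal G_0$ still expand uniformly by some $\lambda'\in(1,\lambda)$ on the open cover $\{U_{z_i}\}$, and the perturbed versions $\widetilde h_j$ of $h_1,\dots,h_m$ still satisfy $\bigcup_j \widetilde h_j^{-1}(B')\supset M$ for a slightly shrunk concentric ball $B'\subset B$ (finitely many open inclusions persist under small perturbation). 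Then: starting from $x$, apply perturbed $\mathcal G_0$-words to blow up a neighborhood of the orbit to the fixed scale $\rho_0$, landing in some point $y$ around which we control a ball of radius $\rho_0$; since $\rho_0$ can be taken to be a Lebesgue number for the cover $\{\widetilde h_j^{-1}(B')\}$, that ball lies inside some $\widetilde h_j^{-1}(B')$, whence $\widetilde h_j(y)\in B'\subset B$, and $\widetilde h_j\circ(\text{word})$ witnesses that the orbit of $x$ meets $B$. This holds for every $x$ and $B$, so $\widetilde\Gamma$ is minimal.

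The main obstacle I expect is the middle step: converting "expanding with uniform rate $\lambda$" into "forward images of small balls reach a definite scale $\rho_0$, uniformly over base points and over small perturbations". One must control the competition between expansion (helping) and nonlinearity/distortion of the maps along the orbit (potentially hurting), and ensure the scale $\rho_0$ and the word-length needed are uniform in $x$. In $C^1$ regularity one does not have bounded distortion of the classical Hölder type, but since we only need to grow a ball to a fixed macroscopic size — not to estimate its measure or density — a continuity/uniform-continuity argument for $Dg$ on the compact manifold, together with choosing the initial neighborhood small enough depending only on $\lambda$ and a modulus of continuity, suffices; this is where care is needed to keep all constants independent of the (finitely many) perturbed generators. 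Theorem~\ref{thmA} then follows immediately by taking $\mathcal F=\mathcal G$ already finite, so that $\mathcal G=\mathcal F$ works and the action is $C^1$-robustly minimal.
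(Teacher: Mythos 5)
Your first step (compactness producing a finite subfamily $\mathcal G_0$, a finite cover $\{U_{z_i}\}$, a uniform rate $\lambda>1$, all stable under $C^1$-perturbation) is exactly the paper's Lemma~\ref{lem1-cpt}. But the middle step, which you yourself flag as the main obstacle, fails for a reason of \emph{direction}, not of distortion control: the condition $m(Dg_z^{-1}(w))>\lambda$ on $U_z$ says that the \emph{inverse} of $g_z$ expands on $U_z$, equivalently that $g_z$ is a $\lambda^{-1}$-contraction on $g_z^{-1}(U_z)$. The maps you are allowed to iterate forward (the semigroup elements) therefore \emph{shrink} small balls near the relevant points, so ``forward images of small balls grow at a uniform rate until they reach a definite size $\rho_0$'' is the opposite of what the hypothesis provides, and no continuity or modulus-of-continuity argument will rescue it. There is a second, related gap: you choose the backward cycle $\bigcup_j h_j^{-1}(B)=M$ for a single fixed ball $B$, so your finite set $\mathcal G_1$ depends on $B$; the concluding sentence ``this holds for every $x$ and $B$'' is then unjustified, because minimality requires hitting \emph{every} ball with one fixed finite generating set.

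The paper's proof turns the contraction to its advantage instead of fighting it. Fix a scale $\varepsilon$ smaller than half the Lebesgue number $L$ of the cover and small enough that $\varepsilon$-balls are connected; by minimality together with a compactness argument over base points, finitely many generators $\mathcal G_2\subset\mathcal F$ make \emph{every} orbit $\varepsilon/2$-dense, robustly. The key step is then: for $B(y,r)\subset B_i$ with $r\le\kappa^{-1}\varepsilon$ one has $h_i(B(h_i^{-1}(y),\kappa r))\subset B(y,r)$, i.e.\ the $h_i$-preimage of a small target ball contains a ball of the \emph{larger} radius $\kappa r$ (connectedness of the ball is used here). An $\varepsilon$-dense orbit therefore enters $B(h_i^{-1}(y),\varepsilon)$, and applying $h_i$ lands it in $B(y,\kappa^{-1}\varepsilon)$; hence an $\varepsilon$-dense orbit is automatically $\kappa^{-1}\varepsilon$-dense, and by induction $\kappa^{-n}\varepsilon$-dense for every $n$, hence dense. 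This bootstrap on scales replaces both your ``blow-up to scale $\rho_0$'' and your ball-by-ball covering, and each ingredient (the cover, the rate $\kappa$, the $\varepsilon/2$-density) persists under $C^1$-perturbation of the finitely many generators, which is where the robustness comes from. I suggest you rework your argument along these lines.
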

In order to prove theorem above, we need the following lemma
which is obtained straightforwardly by a compactness argument.
\begin{lemma} \label{lem1-cpt}
The action of $\Gamma$ on $M$ is expanding if and only if there
are $h_1,\dots,h_k \in \Gamma$, open balls $B_1,\dots,B_k$ in $M$
and a constant $\kappa>1$ such that
%\begin{itemize}
%\item[i)]
$$M= B_1\cup \dots \cup B_k, \quad \text{and} \quad
%\item[ii)]  $
m(Dh_i^{-1}(x))>\kappa, \ \ \text{for all $x \in B_i$ and
$i=1,\dots,k$.}
$$
%\end{itemize}
\end{lemma}
As an immediate consequence of the lemma above one has the following corollary.
\begin{corollary}
\label{rem:exp-open} The set of expanding (finitely generated)
semigroup actions of $C^1$ diffeomorphisms of a compact
manifold $M$ is open.
\end{corollary}
We prove Theorem~\ref{thmA-generalize} using the main idea of Lemma~10.2 in \cite{BBD12}.
\begin{proof}[Proof of Theorem~\ref{thmA-generalize}]
%Consider an expanding minimal action of a se\-mi\-group $\Gamma$
%of $C^1$-diffeo\-mor\-phisms of a compact manifold $M$.
Lemma~\ref{lem1-cpt} provides a finite open cover $\{B_1,\dots,
B_k\}$ of $M$, a constant $\kappa>1$, a finite subset
$\mathcal{G}_1$ of $\mathcal{F}$ and elements $h_1,\dots,h_k$ in the semigroup generated by $\mathcal{G}_1$ such
that
%$h_i^{-1}: h_i(B_i) \to B_i$ and
$$
   d(h_i(x),h_i(y))< \kappa^{-1} d(x,y), \quad \text{for all $x,y \in h_i^{-1}(B_i)$ and
$i=1,\dots,k$.}
$$
Let $L>0$ be a Lebesgue number of cover $\{B_1,\dots,B_k\}$. Take $0<\varepsilon\leq L/2$. Since $M$ is a compact
and locally connected, every open ball of a uniform small size is connected. Hence, we also ask
$\varepsilon>0$ small enough so that every open ball of radius
$r\leq \varepsilon$ is a connected set. Since the $\Gamma$-orbit
of any point $x$ is dense, one can choose a finite $\varepsilon/4$-dense set of points
in the $\Gamma$-orbit of $x$, say $G_x$. Since  $G_x$ Consists of
finitely many elements of $\Gamma$, one can find a finite set
$\mathcal{G}_x\subset \Gamma$ in such a way that the orbit, under the action of semigroup generated by $\mathcal{G}_x$,
of any sufficintly close point to $x$ is also $\varepsilon/2$-dense. Using a compactness
argument, one can find a finite subset $\mathcal{G}_2\in\Gamma$
such that the orbit, under the action of semigroup  generated by $\mathcal{G}_2$, is $\varepsilon/2$-dense.
Now, we consider the sub-semigroup $\Upsilon$ of $\Gamma$ generated by
$\mathcal{G}=\mathcal{G}_1\cup \mathcal{G}_2$ and prove the $C^1$-robust minimality of its action To
do this, let us to take any semigroup $\tilde{\Upsilon}$ generated
by sufficiently small $C^1$-perturbation of the generators
$\mathcal{G}$ of $\Upsilon$ such that the following properties
hold:
\begin{itemize}
\item every point in $M$ has $\varepsilon$-dense
$\tilde{\Upsilon}$-orbit, and
\item there are a finite open cover
$\{B_1,\dots,B_k\}$ of $M$ with a Lebesgue number greater than $L/2$ and
maps $\{h_1,\dots,h_k\}$ in $\tilde{\Upsilon}$ such that the restriction of any $h_i$
to $h_i^{-1}(B_i)$ is a contraction of rate
$\kappa^{-1}$. %for all $i=1,\dots,k$.
\end{itemize}
%Now, a priori, we lose the density of the orbits, however, we just
%have $\varepsilon$-density. Shrinking, if necessary, the size of
%the neighborhood of perturbations, we assume that $\varepsilon\leq
%L/2$. Since $M$ is a compact manifold, it is locally connected and
%then every uniformly small enough open ball is a connected set.
%Hence, we also ask $\varepsilon>0$ small enough so that every open
%ball of radius $r\leq \varepsilon$ is a connected set.
%
Take $y\in B_i$ and $r>0$ such that $B(y,r)\subset B_i$ with
$r\leq \kappa^{-1}\varepsilon$. We claim that
$$h_i(B(h_i^{-1}(y),\kappa r)) \subset B(y,r).$$ Indeed, since the
restriction of $h_i$ to $h_i^{-1}(B_i)$ is a contraction of rate
$\kappa^{-1}$, one has that
$h_i(B(h^{-1}_i(y),\kappa r)) \subset B(y,r) \cup
   (M\setminus B_i).
$
By the inequality $\kappa r \leq \varepsilon$,  $B(h^{-1}_i(y),\kappa r)$ is
connected and the claim is deduced.

Finally, for a given point $x\in M$, we prove the density of the
$\tilde{\Upsilon}$-orbit of $x$. Take an arbitrary point $y\in M$ and choose $1\leq i\leq k$ in such a way that
$B(y,\kappa^{-1}\varepsilon)\subset B_i$.
By $\varepsilon$-density of the $\tilde{\Upsilon}$-orbit of $x$, there is
$h\in\tilde{\Upsilon}$ such that $h(x)\in
B(h^{-1}_i(y),\varepsilon)$. The claim above implies that
$h_i\circ h(x)\in B(y,\kappa^{-1}\varepsilon)$. This shows $\kappa^{-1}\varepsilon$-density
of the $\tilde{\Upsilon}$-orbit of $x$. Processing by induction, one can get $\kappa^{-n}\varepsilon$-density
of the $\tilde{\Upsilon}$-orbit of $x$, for any $n\in\mathbb{N}$, which proves the density of  the $\tilde{\Upsilon}$-orbit of $x$.
\end{proof}
%\begin{lemma}
%Let $\mathcal{F}$ be a family of homeomorphism of $M$ and take $x\in M$. Then the following conditions are equivalent:
%\begin{itemize}
%\item for all $U$ neighborhood of $x$, there exists $T_1,\ldots,T_n \in \G(\mathcal{F})$ (resp. in $\s(\mathcal{F})$) such that
%$$
%    \overline{\mathrm{Orb}(x)} \subset \bigcup_{i=1}^n T_i^{-1}(U)
%$$
%\item every point in $\overline{\mathrm{Orb}(x)}$ has dense orbit.
%\end{itemize}
%
%\end{lemma}
%\subsection{Some consequences of minimality and expansivity}

\section{Ergodicity of expanding actions}
\label{Sec-ergodicity}
\subsection{Proof of Theorem~\ref{thmB}}
\label{ss:proofB}
%the following lemma shows we give an equivalent
%more precisely the notions of minimality and ergodicity for group and semigroup actions.
The notion of expanding action of diffeomorphisms allows us to
adapt the Sullivan's exponential expansion strategy in~\cite{SS85}
to prove Theorem~\ref{thmB} following the spirit of~\cite{Na04}.
To accomplish this task, some preliminary lemmas are required.
\begin{lemma}
\label{preliminar} An action of a semigroup $\Gamma$ of continuous
maps on a compact space $X$ is minimal if and only if for every
open set $U \subset X$, there exist $T_1,\dots,T_m \in \Gamma$
such that $
    X= T^{-1}_1(U) \cup \dots \cup T^{-1}_m(U)$.
\end{lemma}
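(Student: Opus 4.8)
The plan is to prove the two implications of this ``backward cycle'' characterization of minimality separately, using compactness of $X$ in the nontrivial direction.

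First I would handle the easy direction: suppose the action is minimal and let $U\subset X$ be a nonempty open set (the case $U=\emptyset$ forces $X=\emptyset$ and is trivial). Consider the set $W=\bigcup_{T\in\Gamma}T^{-1}(U)$. I claim $W=X$. Note that $X\setminus W=\bigcap_{T\in\Gamma}T^{-1}(X\setminus U)$; one checks this set is $\Gamma$-invariant: if $x\in X\setminus W$ and $f\in\Gamma$, then for every $T\in\Gamma$ we have $Tf\in\Gamma$, so $(Tf)(x)\notin U$, hence $f(x)\in T^{-1}(X\setminus U)$ for all $T$, i.e. $f(x)\in X\setminus W$. Moreover $X\setminus W$ is closed and is a proper subset of $X$ because $U$ is nonempty and open and the action is minimal (so $U$ meets every orbit, meaning every point is sent into $U$ by some element, hence $W=X$ — or equivalently, by minimality the closed invariant set $X\setminus W$, being proper, must be empty). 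Then $\{T^{-1}(U):T\in\Gamma\}$ is an open cover of the compact space $X$, so a finite subcover $X=T_1^{-1}(U)\cup\dots\cup T_m^{-1}(U)$ exists.

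For the converse, assume the covering condition holds for every nonempty open $U$, and let $Y\subset X$ be a nonempty closed $\Gamma$-invariant set; I must show $Y=X$. Pick any $x\in X$ and any open neighborhood $U$ of an arbitrary point $y_0\in Y$; more efficiently, fix $x\in Y$ and let $U$ be any nonempty open set — I want to show $U\cap Y\neq\emptyset$, which gives density of $Y$ and hence, since $Y$ is closed, $Y=X$. Wait: more directly, take any nonempty open $U$. By hypothesis there exist $T_1,\dots,T_m\in\Gamma$ with $X=\bigcup_i T_i^{-1}(U)$. Since $x\in Y\subset X$, there is some $i$ with $x\in T_i^{-1}(U)$, i.e. $T_i(x)\in U$. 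But $T_i(x)\in T_i(Y)\subset Y$ by invariance, so $T_i(x)\in U\cap Y$. Hence $Y$ meets every nonempty open set, so $Y$ is dense; being closed, $Y=X$. This proves the action is minimal.

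I do not expect a serious obstacle here; the only point requiring a little care is the direction of composition in the semigroup (ensuring that the invariant set $X\setminus W$ in the first part is genuinely $\Gamma$-invariant with the convention $f(A)\subset A$), and noting that one only needs the covering property for a single arbitrary nonempty open set to derive density — no iteration or quantitative estimate is involved. The statement is a standard reformulation and the proof is a short compactness argument.
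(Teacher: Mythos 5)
Your proof is correct; note that the paper states this lemma without giving any proof, and your argument (orbit density plus compactness for the forward direction, and the ``some $T_i$ sends $x\in Y$ into $U$'' argument for the converse) is exactly the standard one the authors are implicitly invoking. The only point worth tightening is the parenthetical in the first direction: to see that every point $x$ is sent into $U$ by some element of $\Gamma$, one should observe that the closure of $\{g(x):g\in\Gamma\}$ is a nonempty closed $\Gamma$-invariant set, hence all of $X$ by minimality, so it meets the open set $U$ and therefore some $g(x)$ already lies in $U$ --- this avoids the ambiguity of whether ``the orbit of $x$'' includes $x$ itself.
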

%\begin{proof}
%From the minimality, given any open set $U \subset M$, for every
%$y \in M$ there exists $T_y \in \Gamma$ such that  $T_y(y) \in U$.
%Now, using that $M$ is compact, we get a finite set of maps
%$T_1,\dots,T_m$ such that $M=T_1^{-1}(U)\cup\dots\cup
%T_m^{-1}(U)$. Reciprocally, consider an open set $U$ and a point
%$x\in M$. By the assumption there exist $T_1,\dots,T_m \in \Gamma$
%such that $M= T^{-1}_1(U) \cup \dots \cup T^{-1}_m(U)$. Thus,
%$T_i(x) \in U$, for some $1\leq i\leq m$. That is, $\Gamma$ acts
%minimally on $M$. %
%\end{proof}

Consider a family of maps $ \mathcal{H}=\{h_1,\ldots,h_k\} $.  Given $n\in\mathbb{N}$ and a sequence
$\omega=\omega_1\omega_2\dots\in\Omega=\{1,\dots,k\}^\mathbb{N}$
we denote $h_{\omega}^n=h_{\omega_n}\circ\dots \circ
h_{\omega_1}$. We also denote by $\mathrm{diaminf}\, A$ and
$\mathrm{diamsup}\, A$ the diameter of the largest ball inside of
$A$ and the smallest ball containing $A$ respectively.

\begin{lemma}[Bounded Distortion] \label{prop-dist-2}
Consider a family $\mathcal{H}=\{h_1,\ldots,h_k\} $ of
$C^{1+\alpha}$ diffeomorphisms of a compact Riemannian manifold $M$ and an open covering
$M=B_1\cup \dots \cup B_k$ in such a way that $m(Dh_j(x))>1$, for
all $x\in B_j$. Then,
\begin{enumerate}
\item there exists $L_{\mathcal{H}}>0$ such that for every
$n\in\mathbb{N}$,
$$
L_{\mathcal{H}}^{-1}<\bigg|\frac{\det Dh^n_\omega(x)}{\det
Dh^n_\omega(y)}\bigg|<L_{\mathcal{H}} \quad \text{for all $x,y\in M$}
$$
and any $\omega=\omega_1\omega_2\dots\in\Omega$
for which $
h^i_\omega(x), h^i_\omega(y) \in B_{\omega_{i+1}}$,
for $0\leq i <n-1$. \\[-0.3cm]
\item if, in addition, $h_j$ are  conformal
maps, there exists $K_\mathcal{H}>0$ such that for every $n\in
\mathbb{N}$,
%\begin{equation*}
$
\mathrm{diaminf}\, h_\omega^n(B) \geq K_\mathcal{H} \cdot
\mathrm{diamsup}\, h_\omega^n(B)$
%\end{equation*}
for all open ball $B \subset M$  and
$\omega=\omega_1\omega_2\dots\in\Omega$ for which
$
h^i_\omega(B) \subset B_{\omega_{i+1}}$, for $0\leq i <n-1$.
\end{enumerate}
\end{lemma}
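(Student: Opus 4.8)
The plan is to derive both statements from a single telescoping-plus-bounded-sum mechanism. As a preliminary reduction, since $M$ is compact we may assume there is one fixed $\kappa>1$ with $m(Dh_j(x))\ge\kappa$ for all $x\in B_j$ and all $j$ (this is the situation of Lemma~\ref{lem1-cpt}, which is also the form in which the lemma gets used). The soft input is that for each $j$ the functions $x\mapsto\log|\det Dh_j(x)|$ and $x\mapsto\log\|Dh_j(x)\|$ are $\alpha$-H\"older on $M$: $Dh_j$ is $\alpha$-H\"older in charts, $\det$ and $\|\cdot\|$ are locally Lipschitz, and by compactness $|\det Dh_j|$ and $\|Dh_j\|$ are bounded away from $0$ and $\infty$; let $C_0>0$ be a common bound for these H\"older seminorms. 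The analytic core is a backward geometric contraction along \emph{admissible} itineraries: if $h^i_\omega(x),h^i_\omega(y)\in B_{\omega_{i+1}}$ for $0\le i<n-1$, then the inverse branches $h_{\omega_{i+1}}^{-1}$ contract the relevant geodesic segments by the factor $\kappa^{-1}$, whence
\[
d\bigl(h^i_\omega(x),h^i_\omega(y)\bigr)\le \kappa^{-(n-1-i)}\,\diam(M),\qquad 0\le i\le n-1,
\]
and therefore
\[
\sum_{i=0}^{n-1} d\bigl(h^i_\omega(x),h^i_\omega(y)\bigr)^{\alpha}\le \frac{\diam(M)^{\alpha}}{1-\kappa^{-\alpha}}\eqdef S
\]
uniformly in $n$, in $\omega$, and in admissible pairs $x,y$.

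For part (1), the chain rule gives $\log|\det Dh^n_\omega(x)|=\sum_{i=0}^{n-1}\log|\det Dh_{\omega_{i+1}}(h^i_\omega(x))|$, and likewise at $y$; subtracting and applying the H\"older bound termwise,
\[
\Bigl|\log\Bigl|\tfrac{\det Dh^n_\omega(x)}{\det Dh^n_\omega(y)}\Bigr|\Bigr|\le C_0\sum_{i=0}^{n-1} d\bigl(h^i_\omega(x),h^i_\omega(y)\bigr)^{\alpha}\le C_0 S,
\]
so one may take $L_{\mathcal{H}}=e^{C_0 S}$.

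For part (2), conformality yields $Dh^n_\omega(x)=A^n_\omega(x)\cdot\mathrm{Isom}$ with $A^n_\omega(x)=\prod_{i=0}^{n-1} a_{\omega_{i+1}}(h^i_\omega(x))$, $a_j=\|Dh_j\|$; in particular $h^n_\omega$ is again conformal, with scaling factor $A^n_\omega$. Running the argument of part (1) with $\log a_j$ in place of $\log|\det Dh_j|$ and with $\delta_i\eqdef\diam(h^i_\omega(B))$ in place of $d\bigl(h^i_\omega(x),h^i_\omega(y)\bigr)$ — the $\delta_i$ also contracting geometrically backward because $h^i_\omega(B)\subset B_{\omega_{i+1}}$ for $0\le i<n-1$ — one obtains
\[
\sup_{x,y\in B}\frac{A^n_\omega(x)}{A^n_\omega(y)}\le e^{C_0 S}\eqdef K_1,
\]
i.e. the conformal factor $A^n_\omega$ has distortion at most $K_1$ on $B$. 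It then remains to convert this into the roundness of $h^n_\omega(B)$: a Koebe-type distortion estimate for conformal maps of bounded derivative-distortion on a ball (reducing, if $B$ is not small, to small geodesically convex balls and covering $B$ by finitely many of them, using again that $A^n_\omega$ has distortion $\le K_1$ on $B$) shows that for any fixed $x_0\in B$ the set $h^n_\omega(B)$ contains a ball of radius $\ge c_1 A^n_\omega(x_0)\diam(B)$ and is contained in a ball of radius $\le c_2 A^n_\omega(x_0)\diam(B)$ about $h^n_\omega(x_0)$, with $c_1,c_2>0$ depending only on $M$ and $K_1$. Hence $\mathrm{diaminf}\,h^n_\omega(B)\ge (c_1/c_2)\,\mathrm{diamsup}\,h^n_\omega(B)$, and $K_{\mathcal{H}}=c_1/c_2$ works.

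The main obstacle is the backward geometric contraction: one must verify carefully that along an admissible itinerary the minimizing geodesics between the successive orbit points (resp.\ the sets $h^i_\omega(B)$) stay inside the domains on which the inverse branches $h_{\omega_{i+1}}^{-1}$ are defined and contracting by $\kappa^{-1}$ — this is exactly where the reduction to a uniform rate $\kappa$ and, if necessary, to small scales is used. In part (2) there is the additional, more technical, point of the Koebe-type step, i.e.\ passing from bounded distortion of the scalar factor $A^n_\omega$ on $B$ to bounded eccentricity of the image $h^n_\omega(B)$, uniformly over all balls $B$ and all admissible $\omega,n$.
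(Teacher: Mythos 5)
Your proposal is correct and follows essentially the same route as the paper's proof: part (1) via the chain rule, the $\alpha$-H\"older regularity of $\log|\det Dh_j|$, and the backward geometric contraction of $d(h^i_\omega(x),h^i_\omega(y))$ along admissible itineraries; part (2) via conformality reducing everything to the scalar factor $a^n_\omega$, whose distortion is controlled by the same mechanism, combined with the elementary inclusion bounds relating $\mathrm{diaminf}$ and $\mathrm{diamsup}$ of $h^n_\omega(B)$ to $\inf m(Dh^n_\omega)$ and $\sup\|Dh^n_\omega\|$. The two points you flag as delicate (uniformity of $\kappa$ and the geodesics staying in the expanding domains, and the ``Koebe-type'' roundness step) are treated at the same level of detail in the paper itself, which simply asserts $\kappa\,d(x,y)\le d(h_j(x),h_j(y))$ on $B_j$ and the two-sided radius estimate without further comment.
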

\begin{proof}
Put $\digamma_j=\log |\det Dh_j|$. By the assumption, $\digamma_j$
is $\alpha$-H\"older and so there is a constant $C>0$ such that
for any $x,y$, $|\digamma_j(x)-\digamma_j(y)|\leq C
d(x,y)^\alpha$. Fixing $n\in\mathbb{N}$, let $x$, $y$ be two
points in $M$ and $\omega$ a sequence in $\Omega$ for which
$h^i_\omega(x), h^i_\omega(y) \in B_{\omega_{i+1}}$, for $0\leq i
<n-1$. Since $h_j$ is an expanding map on $B_j$, there exists a
constant $\kappa>1$ such that $\kappa d(x,y)\leq
d(h_j(x),h_j(y))$, for all $x,y \in B_j$ and $j=1,\dots,k$. Thus,
$$
 \kappa \,d(h^{i}_\omega(x),h^{i}_\omega(y)) \leq d(h^{i+1}_\omega(x),h^{i+1}_\omega(y))
$$
and hence,
$$
d(h^i_\omega(x),h^i_\omega(y)) \leq \kappa^{-(n-i)}
d(h_\omega^n(x),h_\omega^n(y))\leq  K \kappa^{-(n-i)},
$$
for all $i=0,\dots,n-1$ where $K=\max_{j=1,\dots, k}
\textrm{diam}(B_j)$. This implies that
\begin{align*}
\log\frac{|\det Dh^n_\omega(x)|}{|\det Dh^n_\omega(y)|}
&=\sum_{i=0}^{n-1}|\digamma_{\omega_{i+1}}(h^i_\omega(x))-\digamma_{\omega_{i+1}}(h^i_\omega(y))|
\leq C \sum_{i=0}^{n-1} d(h^i_\omega(x),h^i_\omega(y))^\alpha \\
&\leq C \sum_{i=0}^{n-1}(K\kappa^{-(n-i)} )^\alpha \leq C
K^\alpha\sum_{i=0}^{\infty}\kappa^{-i\alpha}.
\end{align*}
Taking $L_{\mathcal{H}}=\exp\{CK^\alpha\kappa^{-\alpha}/
(1-\kappa^{-\alpha}) \}$, the desired first inequality holds. To
prove the second inequality, first, note that
$$
  2r \cdot \inf_{x\in B} m(Dh^n_\omega(x)) \leq \mathrm{diaminf}\, h^n_\omega(B)
  \leq \mathrm{diamsup}\, h^n_\omega(B) \leq 2r \cdot \sup_{x\in B}
  \|Dh^n_\omega(x)\|,
$$
where $r>0$ is the radius of the open ball $B$. On the other hand,
from the conformality of $h_j$
$$
    \|Dh_\omega^n(x)\|=m(Dh^n_\omega(x)) =
    \prod_{i=0}^{n-1} a_{\omega_{i+1}}(h^i_\omega(x)) \eqdef a_\omega^n(x).
$$
Hence,
\begin{equation}
\label{eq:inequality}
    \frac{\mathrm{diamsup}\, h^n_\omega(B)}{\mathrm{diaminf}\,
    h^n_\omega(B)} \leq \frac{\sup\{a_\omega^n(x) \, : \, x\in
    B\}}{\inf\{a^n_\omega(x)\, : \, x\in B\}}.
\end{equation}
Since $\digamma_j=\log |a_j|$ is $\alpha$-H\"older and
$|a_j(x)|>1$, for all $x\in B_j$, by means of an analogous
distortion argument as above, one can get similarly a constant
$\tilde{K}_\mathcal{H}>0$ such that
\begin{equation}
\label{eq:tilde}
   \log \frac{|a^n_\omega(x)|}{|a^n_\omega(y)|} \leq
   \tilde{K}_\mathcal{H},
\end{equation}
for any two points $x, y$ and sequence $\omega$ for which
$h^i_\omega(x), h^i_\omega(y) \in B_{\omega_{i+1}}$, for $0\leq i
<n-1$. In view of the inclusion $h^i_\omega(B) \subset
B_{\omega_{i+1}}$, for $0\leq i <n-1$, and combining
\eqref{eq:inequality} and \eqref{eq:tilde} one can get the second
inequality for $K_\mathcal{H}=\exp(\tilde{K}_\mathcal{H})>0$.
\end{proof}

A point $x\in M$  is a \emph{Lebesgue density point} of a
measurable set $A\subset M$ if
$$\lim_{r\rightarrow 0}\frac{\lambda(A\cap
B(x,r))}{\lambda(B(x,r))}=1,$$ where $\lambda$ is the normalized
Lebesgue measure of $M$. Denote by $DP(A)$ the set of density
points of $A$.
%By Lebesgue Density Theorem, $\lambda$-almost every
%point in $A$ is a density point. That is, $\lambda(A\setminus
%DP(A))=0$.
We use the notation $E \mathring{\subset}\,F$, and say that {\it
$E$ is contained (\emph{mod~0}) in $F$}, if $\lambda(E\setminus F)
=0$.
%%In particular, if $E$ is open and mod 0 contained in $F$,
%%then $E\subset \overline{F}$.

The following proposition is the main key ingredient to prove
Theorem~\ref{thmB}.
\begin{proposition}
\label{prop:bola} Consider an expanding action of a semigroup
$\Gamma$ of $C^{1+\alpha}$ conformal diffeomorphisms of a compact Riemannian
manifold $M$. Then, there exists $r>0$ such that for every
$\Gamma$-invariant set $A\subset M$ whose complementary $A^c\eqdef
M\setminus A$ has positive Lebesgue measure there exist an open
ball $B$ of radius $r>0$ so that $\lambda(B\setminus A^c) =0$,
i.e., $B\mathring{\subset}\, A^c$.
\end{proposition}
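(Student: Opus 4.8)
The plan is to exploit the expanding property to cover $M$ with finitely many balls on which suitable elements of $\Gamma$ are uniform contractions (by Lemma~\ref{lem1-cpt}), and then use the bounded distortion estimate for conformal maps (Lemma~\ref{prop-dist-2}(2)) to push a density point of $A^c$ forward to a fixed-size ball that is essentially filled by $A^c$. More precisely, let $\mathcal{H}=\{h_1,\dots,h_k\}$ and balls $B_1,\dots,B_k$ with $M=\bigcup B_j$ and $m(Dh_j^{-1}(x))>\kappa>1$ on $B_j$; equivalently each $h_j$ is an expanding map on $h_j^{-1}(B_j)$, or after renaming, we may directly assume (as in Lemma~\ref{prop-dist-2}) that $m(Dh_j(x))>1$ on an open set $B_j$ with $M=\bigcup_j B_j$. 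Let $K_\mathcal{H}>0$ be the conformal distortion constant from Lemma~\ref{prop-dist-2}(2). Since $A$ is $\Gamma$-invariant we have $h_j(A)\subset A$, hence $h_j^{-1}(A^c)\subset A^c$, so $A^c$ is backward-invariant; this is what lets us pull $A^c$ back (equivalently, push forward along the expanding branches) without losing measure.

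The core step is a "zooming" argument. Pick a Lebesgue density point $x_0\in DP(A^c)$, which exists since $\lambda(A^c)>0$. Because the $h_j$'s are expanding on the cover and $M$ is covered by the $B_j$, starting from any small ball $B(x_0,\rho)$ one can choose a sequence $\omega_1,\omega_2,\dots$ so that $h^i_\omega(B(x_0,\rho))$ stays inside the appropriate $B_{\omega_{i+1}}$ for a while and the images grow: since each step expands co-norm by a factor $>1$ uniformly (co-norm $\geq\kappa$ on each $B_j$ after the reduction), the diameters $\mathrm{diamsup}\,h^n_\omega(B(x_0,\rho))$ increase geometrically until they reach a definite size comparable to the Lebesgue number of the cover. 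Fix such an $n=n(\rho)$ and set $B_\rho\eqdef h^n_\omega(B(x_0,\rho))$. By Lemma~\ref{prop-dist-2}(2), $B_\rho$ contains a ball of radius $\geq (K_\mathcal{H}/2)\,\mathrm{diamsup}\,B_\rho\geq r_0$ for a uniform $r_0>0$, and is contained in a ball of radius $\leq r_1$; so $B_\rho$ has "roundness" bounded below independent of $\rho$. Now estimate $\lambda(B_\rho\setminus A^c)=\lambda(h^n_\omega(B(x_0,\rho)\setminus A^c))$ using the change of variables and bounded distortion of the Jacobian (Lemma~\ref{prop-dist-2}(1), constant $L_\mathcal{H}$): we get $\lambda(B_\rho\setminus A^c)/\lambda(B_\rho)\leq L_\mathcal{H}^2\cdot \lambda(B(x_0,\rho)\setminus A^c)/\lambda(B(x_0,\rho))$, which tends to $0$ as $\rho\to0$ since $x_0$ is a density point. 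Hence for $\rho$ small enough, $B_\rho$ is a set of definite size and definite roundness with $\lambda(B_\rho\setminus A^c)$ arbitrarily small relative to its volume.

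To conclude I would take a limit: let $\rho_m\to0$, obtaining sets $B_m=B_{\rho_m}$ each containing a ball of radius $r_0$ and contained in a ball of radius $r_1$, with $\lambda(B_m\setminus A^c)\to0$ (even relative volume $\to0$). Passing to a subsequence so that the centers converge and using compactness, the inner balls $B(c_m,r_0)$ converge (in Hausdorff distance, say) to a ball $B$ of radius $r\eqdef r_0$; since $B(c_m,r_0)\subset B_m$ and $\lambda(B_m\setminus A^c)\to0$, a routine argument (the symmetric difference of nearby balls has small measure) gives $\lambda(B\setminus A^c)=0$, i.e. $B\mathring{\subset}\,A^c$, which is exactly the claim with this uniform $r$.

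The main obstacle is the bookkeeping in the zooming step: one must choose the expanding itinerary $\omega$ so that the images $h^i_\omega(B(x_0,\rho))$ genuinely remain inside the sets $B_{\omega_{i+1}}$ where the co-norm estimate and the distortion hypotheses of Lemma~\ref{prop-dist-2} apply, and simultaneously ensure the images stop growing at a controlled, uniform scale rather than overshooting. This requires using the Lebesgue number $L$ of the cover together with the roundness control from conformality: as long as $\mathrm{diamsup}\,h^i_\omega(B(x_0,\rho))<L$, the image ball lies in some element of the cover and we may continue; the first time it exceeds (a fixed fraction of) $L$ we stop, and conformality guarantees the stopped set is not too distorted. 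Making this uniform in $\rho$ — so that $r_0,r_1,L_\mathcal{H},K_\mathcal{H}$ do not degenerate — is the technical heart, but it follows from the uniformity already packaged into Lemma~\ref{lem1-cpt} and Lemma~\ref{prop-dist-2}.
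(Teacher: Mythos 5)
Your proposal follows essentially the same route as the paper's proof: blow up a density point of $A^c$ along an expanding itinerary until the image reaches the scale of the Lebesgue number of the cover from Lemma~\ref{lem1-cpt}, use conformality (Lemma~\ref{prop-dist-2}, item (2)) to extract an inner ball of uniform radius, use the Jacobian bounded distortion (item (1)) to transfer the density estimate, and use compactness of $M$ to pass to a limiting ball. The only quibble is a direction slip in your invariance statement: with the expanding maps $h_j=g_j^{-1}$ for $g_j\in\Gamma$, what you actually need (and implicitly use) is $h_j(A^c)\subset A^c$, which follows from $g_j(A)\subset A$; this does not affect the argument.
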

\begin{proof}
By Lemma~\ref{lem1-cpt}, there are maps $g_1,\dots,g_k$ in
$\Gamma$, open sets $B_1,\dots,B_k$ of $M$ and a constant
$\kappa>1$ such that $M= B_1\cup \dots \cup B_k$ and
$m(Dg_i^{-1}(x))>\kappa$, for all $x\in B_i$ and $i=1,\dots,k$. To
simplify the notation take $h_i=g_i^{-1}$. Let $L>0$ be a
Lebesgue number of the open cover above. The Lebesgue density
theorem allows us to take a density point $x_0$ of $A^c=M\setminus
A$. For every $0<\delta<L/2$, open ball $B(x_0,\delta)$ is
contained in some
$B_i$. % and hence $h_i(B(x_0,\delta))$.
Moreover, by the expanding property of $h_i$ on $B_i$, the set
$h_i(B(x_0,\delta))$ contains an open ball of radius $\kappa
\delta$ centered at $h_i(x_0)$. If
$\mathrm{diamsup}\,h_i(B(x_0,\delta))<L$ then $h_i(B(x_0,\delta))$
is again contained in some element of the cover, say $B_j$.
Repeating the process one can show that $h_j\circ
h_i(B(x_0,\delta))$ contains an open ball of radius
$\kappa^{2}\delta$ centered at $h_j\circ h_i(x_0)$. Since this
process provides open balls of strictly increasing radius, one
gets $n\in \mathbb{N}$ and $\omega_j \in \{1,\dots,k\}$, for
$j=1,\dots,n$, such that for $h=h_{\omega_{n}}\circ\dots\circ
h_{\omega_1}$ the followings hold:
$$
h(B(x_0,\delta))\subset B_{\omega_{i+1}},~~~~\text{for $0\leq
i<n$},~~~~\text{and}~~~~ \mathrm{diamsup}\,h(B(x_0,\delta))\geq L.
$$
According to the second item in Lemma~\ref{prop-dist-2},
$h(B(x_0,\delta))$ contains a ball of radius $LK_\mathcal{H}/2$.
Therefore, there exists a sequence $\omega \in
\Omega=\{1,\dots,k\}^\mathbb{N}$ such that for each $t\in
\mathbb{N}$, taking $\delta_t=L/4t$, there exists
$n=n(t)\in\mathbb{N}$ and $x_t\in M$ such that
\begin{equation}
\label{eq:22} B(x_t,LK_\mathcal{H}/2) \subset
h^n_\omega(B(x_0,\delta_t)).
\end{equation}
By the compactness of $M$, taking a subsequence if necessary,
$x_t$ converges to some point, say $x$. Hence,  %since $x_n$ is uniformly far from the boundary then $x\in B$. Moreover,
there exists $t_0 \in \mathbb{N}$ such that
\begin{equation}
\label{eq:33} B(x,r) \subset B(x_t,LK_\mathcal{H} /2), \ \
\text{for all $t\geq t_0$},
\end{equation}
where $r=LK_\mathcal{H}/4>0$. On the other hand, the
inclusions~\eqref{eq:22} and~\eqref{eq:33}, and the (forward)
$\Gamma$-invariance of $A$ implies that $g^{-1}(A^c) \subset A^c$,
for all $g\in \Gamma$. Hence, for every $t\geq t_0$,
$$
 h^n_\omega(B(x_0,\delta_t)\setminus A^c)
 \supset h^n_\omega(B(x_0,\delta_t))\setminus A^c
 \supset B(x,r)\setminus A^c.
$$
For every $t\geq t_0$ one has that,
\begin{equation}
\label{eq:44}
  \frac{\lambda(B(x,r)\setminus A^c)}{\lambda(M)}
  \leq \frac{\lambda (h^n_\omega(B(x_0,\delta_t)\setminus A^c)
  )}{\lambda(h^n_\omega(B(x_0,\delta_t)))}
  \leq L_\mathcal{H}\, \frac{\lambda(B(x_0,\delta_t)\setminus
A^c)}{\lambda(B(x_0,\delta_t))}.
\end{equation}
The last inequality is implied by the bounded distortion property,
Lemma~\ref{prop-dist-2}. Indeed, by the construction, for every $z
\in B(x_0,\delta_t)$ one has that  $h^{i}_\omega(z) \in
B_{\omega_{i+1}}$, for $0\leq i<n$. Now, it suffices to note that
\begin{align*}
\lambda(h^n_\omega(B(x_0,\delta_t)\setminus A^c))
&=\int_{B(x_0,\delta_t)\setminus A^c} |\det Dh^n_\omega | \
d\lambda  \\
&\leq \lambda(B(x_0,\delta_t) \setminus A^c) \sup_{z\in
B(x_0,\delta_t)}
|\det Dh^n_\omega(z)|, \\
\lambda(h^n_\omega(B(x_0,\delta_t))) &=\int_{B(x_0,\delta_t)}
|\det Dh^n_\omega|\ d\lambda  \\
&\geq \lambda(B(x_0,\delta_t))
\inf_{z\in B(x_0,\delta_t)} |\det Dh^n_\omega(z)|
\end{align*}
and therefore, Lemma~\ref{prop-dist-2} implies that
\begin{align*}
\frac{\lambda(h^n_\omega(B(x_0,\delta_t)\setminus A^c)
)}{\lambda(h^n_\omega(B(x_0,\delta_t)))} \leq L_\mathcal{H} \,
\frac{\lambda(B(x_0,\delta_t) \setminus A^c)
}{\lambda(B(x_0,\delta_t)) }.
%\frac{\sup_{z\in B(x_0,\delta_n)}
%|\det Dh^n_\omega(z)|}{\inf_{z\in B(x_0,\delta_n)}
%|\det Dh^n_\omega(z)|}.
\end{align*}
Since $x_0\in DP(A^c)$, one gets that
$$
\lim_{t\to\infty} \lambda(B(x_0,\delta_t)\setminus
A^c)/\lambda(B(x_0,\delta_t))=0.
$$
Now, inequality~\eqref{eq:44} implies that
$\lambda(B(x,r)\setminus A^c)=0$ and the proof is complete.
%we get that
%$$
%\frac{\lambda( f_n(\hat{\Theta}\cap A_n))}{\lambda(f_n(A_n))} \geq
%L_{\mathcal{H}}^{-1}\,\frac{\lambda(\hat{\Theta}\cap A_n)}{\lambda(A_n)}.
%$$
%
\end{proof}

Now, we are ready to prove Theorem~\ref{thmB}.
%\begin{theopargself}
\begin{proof}[Proof of  Theorem~\ref{thmB}]
Consider an expanding minimal action of a semigroup
$\Gamma$ %generated by a family (non-necessarily finite) $\mathcal{F}$
of $C^{1+\alpha}$ conformal diffeomorphisms of a compact manifold
$M$. Let $A$ be a (forward) $\Gamma$-invariant measurable set.
Observe that $A^c=M\setminus A$ is also an invariant set for the
backward semigroup action, i.e., for the action of the semigroup
generated by the inverse maps of $\Gamma$. Supposing
$\lambda(A^c)>0$, we prove $\lambda(A^c)=1$.

By Proposition~\ref{prop:bola}, there exists $x\in M$ and $r>0$
such that $B(x,r) \, \mathring \subset \, A^c$. In view of the
minimality of the action, Lemma~\ref{preliminar} provides
$T_1,\dots, T_m \in \Gamma$ in such a way that
$$M= T_1^{-1}(B(x,r))
\cup \dots\cup T_m^{-1}(B(x,r)).
$$
Since $A^c$ is a forward invariant for the backward semigroup
action, one has that
$$
T_i^{-1}(B(x,r))\setminus A^c \subset T_i^{-1}(B(x,r)\setminus
A^c),$$ for all $i=1,\dots,m$. Hence, the quasi-invariance of
$\lambda$ for $C^1$-diffeomorphisms implies that
$\lambda(T_i^{-1}(B(x,r))\setminus A^c)=0$ and so $\lambda(M
\setminus A^c ) =0$. This proves that $\lambda(A^c)=1$ concluding
the proof of the theorem.
\end{proof}
%\end{theopargself}

%$C^1$-diffeomorphisms we get that the following remark.
%\begin{remark}
%Every expanding minimal semigroup action of
%$C^{1+\alpha}$-diffeo\-mor\-phisms of a compact manifold is also
%backward ergodic with respect to the Lebesgue measure.
%\end{remark}
%\subsection{Robustness of minimality and ergodicity}
%
%Theorem~\ref{thmA} implies that a group action of $C^{1+\alpha}$-diffeomorphisms of a compact manifold is Lebesgue-ergodic if it is expanding and minimal. Notice that every small enough $C^0$-perturbation of a expanding action it is also expanding. Thus, in order to get the robustness of the ergodicity (Corollary~\ref{thmB} and \ref{thmC}) we need to the following result (compared with~\cite[Lemma~10.2]{BBD}).
%\begin{theorem}
%\label{thm:robust-minimality}
%Every expanding minimal group action is $C^0$-robustly minimal.
%\end{theorem}
%\begin{proof}
%
% \end{proof}
%\begin{proof}[Proof of Corollary~\ref{thmB}]
%By Lemma~\ref{thm:robust-minimality} every small enough $C^0$-perturbation
%of $\G(\mathcal{F})$ is also an expanding minimal group. Thus, as a consequence of Theorem~\ref{thmA}, every group generated by $C^{1+\alpha}$-diffeomorphism in this $C^0$-neighborhood is ergodic with respect to the Lebesgue measure and we conclude the robustness of the ergodicity.
% \end{proof}
%
%
%\begin{proof}[Proof of Corollary~\ref{thmC}]
%
% \end{proof}
%\subsection{Proof of Corollary\ref{corB}}

\section{Contracting iterated function systems}
In this section we study contracting iteration function systems
(IFSs for short), i.e, semigroup actions generated by finitely
many contracting maps. These actions have a unique compact minimal
(self-similar) IFS-invariant set called \emph{Hutchinson
attractor}. The restriction of IFS to this attractor is minimal
and expanding. We first study the local ergodicity on Hutchinson
attractor of contracting IFS (see Theorem~\ref{pro5}). Afterward,
we apply this local ergodicity to show the equivalence between the
(unique) stationary measure of contracting IFSs and the
Lebesgue measure (Proposition~\ref{pro3}).

\subsection{Contracting iterated function systems} \label{vitali}
Recall that by an \emph{iterated function system} (IFS) we
understand the action of a semigroup generated by a finite family
of continuous maps on a metric space. IFS is said to be
\emph{contracting} if its generators are contracting maps. A
crucial fact about the contracting IFS is that it has unique
compact set $\Delta$, called \emph{Hutchinson attractor},
satisfying
\begin{equation}
\label{hutchinson}
 \Delta=\bigcup_{i=1}^k
h_i(\Delta),
\end{equation}
where $h_1,\dots, h_k$ are the generators \cite{Hut81}. Denote by
$\IFS(\mathcal{H})$ the semigroup generated by a family
$\mathcal{H}=\{h_1,\dots,h_k\}$ of $C^1$ contracting maps of a
manifold $M$. Clearly, the action of $\IFS(\mathcal{H})$ on the
Hutchinson attractor $\Delta$ is minimal and expanding.

In what follows, we work with the contacting IFS whose Hutchinson
attractor $\Delta$ has positive Lebesgue measure. For instance,
this is the case of a contracting IFS realized by a blending
region (see Subsection~\ref{blending}).

\begin{definition}
A contracting IFS is \emph{ergodic on the Hutchinson
attractor} $\Delta$ (with respect to $\lambda$) if
$\lambda(A)\in\{0,\lambda(\Delta)\}$, for all IFS-invariant
$\lambda$-measurable set $A \subset \Delta$.
\end{definition}

In order to study the local ergodicity with respect to
Lebesgue measure $\lambda$ we need to impose a regularity
criterion.

\begin{definition} We say that $\IFS(\mathcal{H})$ is \emph{Vitali-regular
(V-regular)} if there is a Vitali-regular cover
$$
\mathcal{V}\subset \{h(\Delta): \,h\in \IFS(\mathcal{H}) \}
$$
of its Hutchinson attractor $\Delta$. This means that there is a
constant $C>0$ such that
\begin{itemize}
\item for any $V\in\mathcal{V}$,
$(\diam V)^d\leq C\lambda(V)$,  where $d=\dim M$,
\item for any $\delta>0$ and $x\in\Delta$, there is
$V\in\mathcal{V}$, with $x\in V$ and $\diam V\leq\delta$.
\end{itemize}
We say that $\IFS(\mathcal{H})$ has \emph{bounded distortion (BD)}
if there is $L>0$ such that for every $h \in \IFS(\mathcal{H})$,
$$
L^{-1}<\bigg|\frac{\det Dh(x)}{\det Dh(y)}\bigg|<L, \quad
\text{for any $x,y\in \Delta$.}
$$
\noindent Finally, a contracting IFS is {\it BDV-regular} if it is
Vitali-regular and has bounded distortion.
\end{definition}
It is not hard to see that the conformality of the elements of
$\mathcal{H}$ implies Vitali-regularity of $\IFS(\mathcal{H})$. On
the other hand, there are two classical tools to guarantee the
bounded distortion property. One is $C^{1+\alpha}$-regularity of
the generators and the other one, which is actually weaker, is
Dini-regularity of the generators. Recall that a $C^1$-map $\phi$
is \emph{Dini} if,
$$
   \int_{0}^1 \frac{\Omega(\log \|D\phi(\cdot)\|,t)}{t} \,
   dt<\infty,
   %\quad \text{for all $x\in D$}
$$
where $\Omega(p,t)$ is the modulus of continuity of $p:M\to
\mathbb{R}$. If the generators are $C^{1+\alpha}$ the bounded
distortion follows arguing as in Lemma~\ref{prop-dist-2}. By means
of similar arguments one can also prove the bounded distortion
property for Dini-contracting IFS (see~\cite{FL99}).
\subsection{Local ergodicity on Hutchinson attractor}
\label{Hut} %We begin by the definition of the local ergodicity we
%are using here. In what follows we will assume that we work with
%contacting IFS whose Hutchinson attractor $\Delta$ has positive
%Lebesgue measure. For instance, this is the case of a contracting
%IFS generated by blending region (see the following subsection).
%
%A contracting IFS is said to be \emph{ergodic on the Hutchinson
%attractor} if $\lambda(A)\in\{0,\lambda(\Delta)\}$, for all
%IFS-invariant $\mu$-measurable set $A \subset \Delta$.
The main
result of this subsection is the following.
\begin{theorem}
\label{pro5} Every BDV-regular contracting IFS on the Hutchinson
attractor is ergodic with respect to the Lebesgue measure.
\end{theorem}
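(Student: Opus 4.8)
The plan is to run the classical Lebesgue-density/exponential-expansion argument of Sullivan--Shub adapted to the Hutchinson attractor, using the two structural hypotheses (Vitali-regularity and bounded distortion) exactly where the conformal case used Lemma~\ref{prop-dist-2}. Let $A\subset\triangle$ be an IFS-invariant measurable set with $\lambda(A)>0$; I want to show $\lambda(\triangle\setminus A)=0$. Passing to the complement inside $\triangle$, note that $A^c=\triangle\setminus A$ is \emph{backward} invariant in the sense that $h_i^{-1}(A^c)\cap\triangle\subset A^c$ for each generator (since $h_i(A)\subset A$). Equivalently, if $\Delta_w\eqdef h_{w_1}\circ\dots\circ h_{w_n}(\triangle)$ denotes a cylinder piece of the attractor for a word $w=w_1\dots w_n$, then invariance of $A$ gives the key ``saturation'' inequality: for every density point and every cylinder the relative measure of $A^c$ inside $h_w(\triangle)$ is comparable, up to the bounded-distortion constant $L$, to the relative measure of $A^c$ inside $\triangle$.

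The main steps, in order, are as follows. First, fix a Lebesgue-density point $x_0\in DP(A^c)$, which exists since $\lambda(A^c)=\lambda(\triangle)-\lambda(A)$; wait---this is precisely the point where I must instead take $x_0\in DP(A)$ and aim to show $\lambda(A)=\lambda(\triangle)$, using that $A$ (not $A^c$) is the forward-invariant object and that density points of $A$ are abundant because $\lambda(A)>0$. Second, using the second bullet of Vitali-regularity, choose a nested sequence of Vitali sets $V_t=h_{w(t)}(\triangle)\in\mathcal V$ with $x_0\in V_t$ and $\diam V_t\to 0$; by the first bullet, $(\diam V_t)^d\le C\lambda(V_t)$, so these sets are not measure-theoretically thin. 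Third, apply the inverse branch $g_t\eqdef h_{w(t)}^{-1}$ (defined on $V_t$, mapping $V_t$ onto $\triangle$): by bounded distortion, $|\det Dg_t|$ varies by at most the factor $L$ on $V_t$, hence
$$
\frac{\lambda(\triangle\setminus A)}{\lambda(\triangle)}
=\frac{\lambda\bigl(g_t(V_t\setminus A)\bigr)}{\lambda\bigl(g_t(V_t)\bigr)}
\le L\,\frac{\lambda(V_t\setminus A)}{\lambda(V_t)},
$$
where the first equality uses $g_t(V_t)=\triangle$ together with $g_t(V_t\setminus A)=\triangle\setminus A$ \emph{mod 0} (this last identity is exactly where forward invariance $h_{w(t)}(A)\subset A$, i.e.\ $g_t^{-1}(A)\supset A\cap V_t$, is used---more carefully, $g_t(V_t\cap A)\subset A$, so $g_t(V_t\setminus A)\supset \triangle\setminus A$, which is the direction needed). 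Fourth, let $t\to\infty$: Vitali-regularity guarantees $\lambda(V_t\setminus A)/\lambda(V_t)\to 0$ because $x_0$ is a density point of $A$ and the $V_t$ shrink to $x_0$ with bounded eccentricity $(\diam V_t)^d\le C\lambda(V_t)$ (so they can replace balls in the density computation). Conclude $\lambda(\triangle\setminus A)=0$.

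The two places that need care, and which I expect to be the real obstacles, are: (i) justifying that a Vitali-regular cover really lets one run the Lebesgue density argument along the sets $V_t$ rather than along genuine balls---this is the standard fact that a bounded-eccentricity shrinking family detects density points, and it is where the inequality $(\diam V)^d\le C\lambda(V)$ is consumed; and (ii) the bookkeeping of invariance under inverse branches, since the $h_i$ need not be injective globally and the cylinder pieces $h_w(\triangle)$ may overlap---one must check that the identities $g_t(V_t)=\triangle$ and $g_t(V_t\setminus A)\supset\triangle\setminus A$ hold at least \emph{mod 0}, using \eqref{hutchinson} and the definition of $\triangle$ as the attractor, together with the bounded-distortion control to ensure overlaps do not destroy the measure estimates. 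Both are routine once set up correctly, but the overlap/injectivity issue is the subtle one and is the analogue of the connectedness argument used in the proof of Proposition~\ref{prop:bola}; here it is replaced by the Vitali cover hypothesis, which is precisely engineered to bypass it.
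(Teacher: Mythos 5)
Your overall strategy (density point plus bounded distortion plus Vitali sets of bounded eccentricity, pulled back to all of $\triangle$ by an inverse branch) is the same mechanism the paper uses, but there is a genuine error at the step you yourself flagged as the place where invariance enters, and it is introduced precisely by your mid-proof ``correction'' from $DP(A^c)$ to $DP(A)$. You claim $g_t(V_t\cap A)\subset A$, deducing it from ``$g_t^{-1}(A)\supset A\cap V_t$''. But $g_t^{-1}(A)=h_{w(t)}(A)$, and forward invariance gives $h_{w(t)}(A)\subset A\cap V_t$, i.e.\ the \emph{opposite} inclusion. What you actually need, $g_t(V_t\cap A)=h_{w(t)}^{-1}(A)\cap\triangle\subset A$, is \emph{backward} invariance of $A$, which does not follow from $h(A)\subset A$: points of $\triangle\setminus A$ may well map into $A$. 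Consequently the chain $\lambda(\triangle\setminus A)/\lambda(\triangle)\le L\,\lambda(V_t\setminus A)/\lambda(V_t)$ fails; the inequality that forward invariance really yields is the reverse one, $\lambda(V_t\setminus A)/\lambda(V_t)\le L\,\lambda(\triangle\setminus A)/\lambda(\triangle)$, which is vacuous at a density point of $A$.

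The fix is exactly your first instinct. The backward-invariant object (relative to $\triangle$) is $\hat A\eqdef\triangle\setminus A$: if $y\in\triangle$ and $h_w(y)\in\hat A$ then $y\notin A$ (else $h_w(y)\in h_w(A)\subset A$), so $h_w(A)\subset V_t\cap A=V_t\setminus\hat A$ and hence, for $V_t\ni x_0\in DP(\hat A)$,
$$
\frac{\lambda(A)}{\lambda(\triangle)}\le\frac{\lambda\bigl(g_t(V_t\setminus\hat A)\bigr)}{\lambda\bigl(g_t(V_t)\bigr)}\le L\,\frac{\lambda(V_t\setminus\hat A)}{\lambda(V_t)}\longrightarrow 0 .
$$
So the correct dichotomy is: either $\lambda(\hat A)=0$, or a density point of $\hat A$ forces $\lambda(A)=0$; one application to the complement gives ergodicity, and there is no single run of the argument at a density point of $A$ that shows $\lambda(A)=\lambda(\triangle)$. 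This is, modulo packaging, what the paper does: it saturates $\hat A$ into the automatically backward-invariant set $\mathrm{Orb}^-_{\mathcal H}(\hat A)$, proves the zero--one law for such sets (Lemma~\ref{lem:key}) by covering a ball around a density point with a \emph{disjoint} Vitali family $\{h(\triangle)\}$ and summing the distortion estimates, and then checks $\mathrm{Orb}^-_{\mathcal H}(\hat A)\cap\triangle=\hat A$ using forward invariance of $A$. Your remaining concerns (i) and (ii) are indeed routine: bounded eccentricity does let the $V_t$ detect density points, and injectivity of the contractions makes the single-sequence version work without worrying about overlaps; the real obstacle is the reversed inclusion above.
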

We provide the proof of Theorem~\ref{pro5} through a few lemmas.
%We continue the discussion by presenting two basic lemmas for the
%proof of Theorem~\ref{thmD}.
%%%%%%%%%% corte aqui

\begin{lemma}
Every Vitali-regular contracting IFS satisfies the following
property: for any $x\in DP(\Delta)$ there are two constants
$C_1,C_2>0$ such that for every $\delta>0$ there is
$\mathcal{V}_\delta \subset \{h(\Delta): h\in
\IFS(\mathcal{H})\}$ with %having the following property
\begin{equation}
\label{eq:prop1}
   C_1 \lambda(B(x,\delta)) \geq \lambda(\bigcup_{V\in \mathcal{V}_\delta}V)=
   \sum_{V\in \mathcal{V}_\delta} \lambda(V) \geq C_2 \lambda( B(x,\delta)\cap \Delta).
\end{equation}
\end{lemma}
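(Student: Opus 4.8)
The plan is to use the Vitali covering lemma together with the Vitali-regularity of the IFS. Fix a density point $x\in DP(\triangle)$ and $\delta>0$. First I would consider the collection of all sets $V\in\mathcal{V}$ that are contained in $B(x,\delta)$; by the second property of a Vitali-regular cover this is a genuine Vitali cover of $B(x,\delta)\cap\triangle$ (every point of $\triangle$ admits arbitrarily small $V$'s through it, and once $\diam V$ is small enough relative to the distance to $\partial B(x,\delta)$, such a $V$ lies inside $B(x,\delta)$). Then I would apply the Vitali covering lemma to extract a countable \emph{disjoint} subfamily $\mathcal{V}_\delta$ whose union covers $\lambda$-almost all of $B(x,\delta)\cap\triangle$; this is where I use that $\lambda$ is a doubling (Lebesgue) measure on $M$ so the classical Vitali lemma applies. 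The disjointness gives the middle equality in~\eqref{eq:prop1}.

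Next, the upper bound $C_1\lambda(B(x,\delta))\geq\lambda(\bigcup_{V\in\mathcal{V}_\delta}V)$ is immediate with $C_1=1$, since every $V\in\mathcal{V}_\delta$ is contained in $B(x,\delta)$ and the $V$'s are disjoint, so their total measure is at most $\lambda(B(x,\delta))$. For the lower bound, since $\mathcal{V}_\delta$ covers $B(x,\delta)\cap\triangle$ mod $0$, we have $\lambda\bigl(\bigcup_{V\in\mathcal{V}_\delta}V\bigr)\geq\lambda(B(x,\delta)\cap\triangle)$, which already gives~\eqref{eq:prop1} with $C_2=1$. In fact the constants can be taken to be $1$, but since we only need \emph{some} positive constants depending on $x$ (through the constant $C$ in the definition of Vitali-regularity, which we do not even need here for these two particular inequalities) this is harmless; I would state it with explicit constants $C_1=C_2=1$ to keep the proof transparent, and note the Vitali-regularity constant $C$ is what will matter in the subsequent lemmas comparing $\lambda(V)$ with $(\diam V)^d$.

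The main obstacle, such as it is, is purely a matter of being careful that the subfamily we extract is a Vitali cover \emph{of $B(x,\delta)\cap\triangle$} rather than of all of $B(x,\delta)$: points of $B(x,\delta)\setminus\triangle$ need not be covered, which is why the lower bound is phrased in terms of $\lambda(B(x,\delta)\cap\triangle)$ and not $\lambda(B(x,\delta))$. One must also check that restricting to those $V$ lying inside $B(x,\delta)$ does not destroy the Vitali property near $x$: for a point $y\in B(x,\delta)\cap\triangle$ and any threshold $\eta>0$, pick $V\in\mathcal{V}$ with $y\in V$ and $\diam V<\min(\eta,\operatorname{dist}(y,\partial B(x,\delta)))$; then $V\subset B(x,\delta)$ automatically. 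With this observation the standard Vitali covering lemma applies verbatim and the three-term chain in~\eqref{eq:prop1} follows.
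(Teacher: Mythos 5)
Your proof is correct and is essentially the paper's own argument: both apply the Vitali covering theorem to extract a disjoint subfamily of $\{h(\triangle): h\in\IFS(\mathcal{H})\}$ whose union covers $B(x,\delta)\cap\triangle$ mod $0$, which yields \eqref{eq:prop1} with $C_1=C_2=1$. One small correction: the first condition of Vitali-regularity, $(\diam V)^d\leq C\lambda(V)$, is not dispensable as you suggest --- it is precisely the regularity hypothesis that allows the Vitali covering theorem to be applied to a cover by the non-ball sets $h(\triangle)$ (the doubling property of $\lambda$ alone does not suffice for arbitrary covering sets), even though the constant $C$ does not appear in the final inequalities.
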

\begin{proof}
Fix $x\in \Delta$ and $\delta>0$. Since the contracting IFS is
Vitali-regular, by Vitali's covering theorem for the Lebesgue
measure, there exists a finite or countably infinite disjoint
subcollection $\mathcal{V}_\delta$ of $\{h(\Delta): h \in
\IFS(\mathcal{H})\}$ such that
$$
B(x,\delta) \cap \Delta \mathring{=} \biguplus_{V\in
\mathcal{V}_\delta} V.
$$
This implies~\eqref{eq:prop1} taking $C_1=C_2=1$.  \end{proof}

As before, we denote by $\IFS(\mathcal{H})$ the semigroup
generated by the family of contracting maps
$\mathcal{H}=\{h_1,\dots,h_k\}$. Given a set $A \subset M$ put
$$
  \mathrm{Orb}^-_\mathcal{H}(A) = \bigcup_{h\in \IFS(\mathcal{H})} h^{-1}(A) \cup A.
$$
\begin{lemma}
\label{lem:key}Assume that $\IFS(\mathcal{H})$ satisfies
~\eqref{eq:prop1} and $A$ is a measurable set of $M$. Then
$$
  \lambda(\mathrm{Orb}_\mathcal{H}^-(A)\cap \Delta ) \in \{0, \lambda(\Delta)\}.
$$
Moreover, if $DP(A)\cap DP(\Delta) \not=\emptyset$ then it always
holds that $ \Delta \,\mathring\subset \,
\mathrm{Orb}_\mathcal{H}^-(A). $
\end{lemma}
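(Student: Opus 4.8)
The plan is to set $W=\mathrm{Orb}^-_{\mathcal H}(A)$ and $E=W\cap\triangle$ and to run a Lebesgue‑density/covering argument, organized around two structural observations. First, since $h^{-1}\bigl(g^{-1}(A)\bigr)=(g\circ h)^{-1}(A)$, the set $W$ satisfies $h^{-1}(W)\subset W$ for every $h\in\IFS(\mathcal H)$; and because the attractor is forward invariant ($h(\triangle)\subset\triangle$, hence $\triangle\subset h^{-1}(\triangle)$), this yields $h^{-1}(E)\cap\triangle\subset E$ --- that is, $E$ is \emph{backward saturated in $\triangle$} (equivalently $\triangle\setminus E$ is forward invariant); note also $A\cap\triangle\subset E$. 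Second, this disposes of one alternative right away: if $\lambda(A\cap\triangle)=0$ then $\lambda(E)=0$, because $E=(A\cap\triangle)\cup\bigcup_{h\in\IFS(\mathcal H)}\bigl(h^{-1}(A)\cap\triangle\bigr)$ is a countable union and each $h^{-1}(A)\cap\triangle$ is Lebesgue‑null: the generators are $C^1$ maps with nonvanishing Jacobian (forced by the bounded distortion property, cf.\ Lemma~\ref{prop-dist-2}), hence injective diffeomorphisms onto their images which carry null sets to null sets in both directions, and $h\bigl(h^{-1}(A)\cap\triangle\bigr)\subset A\cap\triangle$.

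The heart is the converse: if $\lambda(E)>0$ then $\lambda(E)=\lambda(\triangle)$. Pick a Lebesgue density point $x_0$ of $E$; since $E\subset\triangle$ one gets $x_0\in DP(\triangle)$ and $\lambda\bigl((B(x_0,\delta)\cap\triangle)\setminus E\bigr)=o\bigl(\lambda(B(x_0,\delta)\cap\triangle)\bigr)$ as $\delta\to 0$. Apply \eqref{eq:prop1} at $x_0$ --- more precisely the covering constructed in the proof of the preceding lemma, which realizes \eqref{eq:prop1} with $B(x_0,\delta)\cap\triangle\mathring{=}\biguplus_{V\in\mathcal V_\delta}V$ for a disjoint family $\mathcal V_\delta\subset\{h(\triangle):h\in\IFS(\mathcal H)\}$. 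Summing over this partition, $\sum_{V\in\mathcal V_\delta}\lambda(V\setminus E)=\lambda\bigl((B(x_0,\delta)\cap\triangle)\setminus E\bigr)=o\bigl(\sum_{V}\lambda(V)\bigr)$, so since $\bigl(\min_V \lambda(V\setminus E)/\lambda(V)\bigr)\sum_V\lambda(V)\le\sum_V\lambda(V\setminus E)$, there is $V_\delta=h_\delta(\triangle)\in\mathcal V_\delta$ (with $h_\delta\in\IFS(\mathcal H)$) such that $\varepsilon_\delta:=\lambda(V_\delta\setminus E)/\lambda(V_\delta)\to 0$.

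Now I would transfer this back to $\triangle$ using the bounded distortion of the cylinder map $h_\delta$. By injectivity of $h_\delta$, $\triangle\setminus h_\delta^{-1}(E)=h_\delta^{-1}\bigl(h_\delta(\triangle)\setminus E\bigr)=h_\delta^{-1}(V_\delta\setminus E)$, and bounded distortion gives $\lambda\bigl(h_\delta^{-1}(S)\bigr)/\lambda(\triangle)\le L\,\lambda(S)/\lambda(V_\delta)$ for every measurable $S\subset V_\delta$ (both sides are ratios of integrals of $|\det Dh_\delta|$); hence $\lambda\bigl(\triangle\setminus h_\delta^{-1}(E)\bigr)\le L\,\varepsilon_\delta\,\lambda(\triangle)$. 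But backward saturation gives $h_\delta^{-1}(E)\cap\triangle\subset E$, so $\triangle\setminus E\subset\triangle\setminus h_\delta^{-1}(E)$ and therefore $\lambda(\triangle\setminus E)\le L\,\varepsilon_\delta\,\lambda(\triangle)\to 0$, i.e.\ $\lambda(E)=\lambda(\triangle)$. Combined with the easy case, this gives the dichotomy $\lambda(E)\in\{0,\lambda(\triangle)\}$. For the ``moreover'', one checks (a two‑line density computation) that $DP(A)\cap DP(\triangle)=DP(A\cap\triangle)$, so the hypothesis $DP(A)\cap DP(\triangle)\ne\emptyset$ forces $\lambda(A\cap\triangle)>0$, whence $\lambda(E)\ge\lambda(A\cap\triangle)>0$ and the previous paragraph gives $\lambda(E)=\lambda(\triangle)$, i.e.\ $\triangle\mathring{\subset}\,\mathrm{Orb}^-_{\mathcal H}(A)$.

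The step I expect to be the main obstacle is the pull‑back of the third paragraph: upgrading ``the scaled copy $h_\delta(\triangle)$ of the attractor is $E$‑almost‑full'' to ``$\triangle$ itself is $h_\delta^{-1}(E)$‑almost‑full''. This is exactly where bounded distortion of the maps of $\IFS(\mathcal H)$ is indispensable --- for a general $C^1$ contracting IFS a set of tiny relative measure inside a deep cylinder can pull back to a set of large relative measure in $\triangle$, so the statement genuinely belongs to the bounded‑distortion (BDV‑regular) setting --- and, together with the null‑set preservation used in the easy case, it is the only place where more than Vitali regularity enters.
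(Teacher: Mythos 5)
Your proposal is correct and follows essentially the same route as the paper: a Lebesgue density point of $\mathrm{Orb}^-_{\mathcal H}(A)\cap\triangle$, the disjoint Vitali cover by cylinders $h(\triangle)$ realizing \eqref{eq:prop1}, the backward saturation of $\mathrm{Orb}^-_{\mathcal H}(A)$, and the bounded-distortion comparison of $\lambda(V\setminus E)/\lambda(V)$ with $\lambda(\triangle\setminus E)/\lambda(\triangle)$. The only cosmetic differences are that you pigeonhole one good cylinder and pull back where the paper sums the pushed-forward per-cylinder lower bounds, and that you deduce the ``moreover'' part from the dichotomy via $\lambda(A\cap\triangle)>0$ instead of rerunning the estimate with $A$ in place of $\mathrm{Orb}^-_{\mathcal H}(A)$.
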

\begin{proof}
%Let $A$ be a measurable set in the Hutchinson attractor $\Delta$ such that $h^{-1}(A) \subset A$ for all $h$ in the IFS.
Put $\Theta=\mathrm{Orb}_\mathcal{H}^-(A)$. By the assumption
$\lambda(\Delta)>0$ and so one can assume that $\lambda(\Theta\cap
\Delta )>0$. We prove $ \lambda(\Delta\setminus\Theta)=0$.

The Lebesgue density theorem implies the existence of a density
point $x$ of $\Theta\cap \Delta$.
%In order to show this, we  assume that $\lambda(\Delta \setminus A)>0$ and then we will get a contradiction.
%We consider that $\lambda(A)>0$ and we prove that $\lambda(A)=\lambda(\Delta)$. In order to do this, we assume that $\lambda(\Delta \setminus A)>0$ we get a contradiction.
%By hypothesis, there exists a density point $x$ of both $A$ and $\Delta$.
Hence, one can find $\delta_0>0$ such that
$$
   \lambda(B(x,\delta)\cap \Delta)>
   \lambda(B(x,\delta)) /2, \quad \text{for all $\delta_0 \geq \delta >0$}.
$$
By assumption, there are two constants $C_1,C_2>0$ such that for
every $\delta >0$ with $\delta \leq \delta_0$, there is
$\mathcal{V}_\delta \subset \{h(\Delta): h\in \IFS(\mathcal{H})\}$
satisfying the following inequalities
\begin{equation*}
   C_1 \lambda(B(x,\delta)) \geq \lambda(\bigcup_{V\in \mathcal{V}_\delta}V)
   =\sum_{V\in \mathcal{V}_\delta} \lambda(V) \geq C_2 \lambda( B(x,\delta)\cap \Delta).
\end{equation*}
By the backward invariance of $\Theta$, i.e.
$h^{-1}(\Theta)\subset \Theta$, for all $h\in \IFS(\mathcal{H})$, %and by the bounded distortion
one gets that
\begin{align*}
%$$
\frac{\lambda(B(x,\delta)\setminus \Theta)}{\lambda(B(x,\delta))}
&\geq \frac{1}{C_1} \frac{\lambda(\cup_{V\in \mathcal{V}_\delta} (
V \setminus \Theta))}{\lambda(B(x,\delta))} \\ &= \frac{1}{C_1}
\sum_{V\in \mathcal{V}_\delta} \frac{\lambda(V\setminus \Theta)}{
\lambda(B(x,\delta))} %\\[0.2cm] &
\geq \frac{1}{C_1} \sum_{h(\Delta)\in \mathcal{V}_\delta}
\frac{\lambda(h(\Delta \setminus
\Theta))}{\lambda(B(x,\delta))}.%$$
\end{align*}
Since the IFS has bounded distortion (with constant $L>0$), it
follows that
\begin{align*}
\sum_{h(\Delta)\in \mathcal{V}_\delta} \frac{\lambda(h(\Delta
\setminus \Theta))}{\lambda(B(x,\delta))} &=
\sum_{h(\Delta)\in
\mathcal{V}_\delta} \frac{\lambda(h(\Delta \setminus
\Theta))}{\lambda(h(\Delta))}
\frac{\lambda(h(\Delta))}{\lambda(B(x,\delta))} \\
&\geq L \,
\frac{\lambda(\Delta \setminus \Theta)}{\lambda(\Delta)}
\sum_{V\in \mathcal{V}_\delta}
\frac{\lambda(V)}{\lambda(B(x,\delta))}
\\[0.2cm]  &\geq L C_2 \,
\frac{\lambda(\Delta \setminus \Theta)}{\lambda(\Delta)}
\frac{\lambda(B(x,\delta)\cap \Delta)}{\lambda(B(x,\delta))} \\[0.2cm]  &>
\frac{L C_2}{2} \, \frac{\lambda(\Delta \setminus
\Theta)}{\lambda(\Delta)}.
\end{align*}
Therefore, one obtains
$$
\frac{\lambda(B(x,\delta)\setminus \Theta)}{\lambda(B(x,\delta))}
\geq \frac{L}{2}\frac{C_2}{C_1} \frac{\lambda(\Delta\setminus
\Theta)}{\lambda(\Delta)}, \quad \text{for all $0<\delta \leq
\delta_0$.}
$$
%since $\lambda(\Delta\setminus \Theta)
Since $x$ is a Lebesgue density point of $\Theta$ one gets
$$
 0=\lim_{\delta\to 0} \frac{\lambda(B(x,\delta)\setminus \Theta)}
 {\lambda(B(x,\delta))} \geq \frac{L}{2}\frac{C_2}{C_1}
 \frac{\lambda(\Delta \setminus \Theta)}{\lambda(\Delta)}.
$$
This implies that $\lambda(\Delta \setminus \Theta)=0$ concluding
the proof of the first part of the lemma.
Now, to show the second part, take a density point $x$ of both $A$
and $\Delta$. Since $A\subset \Theta$, the
argument above  shows that
$$
   \frac{\lambda(B(x,\delta)\setminus A)}{\lambda(B(x,\delta))}
   \geq \frac{\lambda(B(x,\delta)\setminus \Theta)}
   {\lambda(B(x,\delta))} > \frac{L}{2}\frac{C_2}{C_1}
   \frac{\lambda(\Delta\setminus \Theta)}{\lambda(\Delta)}.
$$
As $x$ is a density point of $A$ one can get $\lambda(\Delta\setminus \Theta)=0$,
concluding the lemma.
\end{proof}

\begin{proof}[Proof of Theorem~\ref{pro5}]
Let $A$ be a measurable set containing in the Hutchinson attractor $\Delta$
such that $h(A)\subset A$, for all $h\in\IFS(\mathcal{H})$. Putting
$\hat A = \Delta \setminus A$, wee claim that
$$
  h^{-1}(\hat A\cap h(\Delta)) \subset \hat A, \quad \text{for all $h\in \IFS(\mathcal{H})$.}
$$
By contradiction, suppose $x$ is a point of $\hat A \cap
h(\Delta)$ such that $h^{-1}(x)\not\in \hat A$. Since $x\in\hat
A$, by the invariance, $x \not\in h(A)$, that is $h^{-1}(x)
\not\in A$. On the other hand,  by the assumption,
$h^{-1}(x)\not\in\hat A$. Hence $h^{-1}(x)\not\in \Delta$ which
yields a contradiction. Thus,
$$
   \mathrm{Orb}^-_\mathcal{H}(\hat A) \cap \Delta=
   \bigcup_{h\in\IFS(\mathcal{H})} h^{-1}(\hat{A}\cap h(\Delta)) \cup (\hat{A}\cap \Delta) = \hat{A}.
$$
Now, by Lemma~\ref{lem:key} one has that $\lambda(\hat{A}) \in
\{0,\lambda(\Delta)\}$, that means $\lambda(A)$ equals to
either zero or $\lambda(\Delta)$, concluding the proof.
\end{proof}
\begin{remark}
\label{rem1} Theorem~\ref{pro5} is also valid for the Hausdorff
$s$-dimensional measure. More details on the adaption of the proof above to such case can be found in~\cite{FL99,HLW02}.
\end{remark}
\subsection{Stationary measures} \label{stationary}
We apply the local ergodicity on the Hutchinson attractor %in the
%previous subsection
to study the important question concerning the
equivalence of the stationary measure and The Lebesgue measure
(\cite{MS98,LNR01,PSS00,PSS06}). Recall that a probability measure $\mu$ is {\it
stationary measure} for $\IFS(h_1,\dots,h_k)$ with positive
probabilities $p_1,\dots,p_k$ ($\sum_{i=1}^k p_i=1$) %and $\mathcal{H}=\{h_1,\dots,h_k\}$,
if
\begin{equation}
\label{SSA}
   \mu= \sum_{i=1}^k p_i \cdot (\mu\circ h_i^{-1}).
\end{equation}
It is well known that any contracting IFS admits a unique stationary measure (\cite{DF,Hut81}).
\begin{proposition}
\label{pro3} Suppose that $\mu$ is the stationary probability
measure on $M$ corresponding to a BDV-regular contracting IFS.
Let $\Delta$ be the support of $\mu$. If $\mu$ is
not singular to Lebesgue measure $\lambda$, then
\begin{enumerate}
\item $\mu$ is absolutely continuous with respect to $\lambda$,
\item $\lambda|_\Delta$ is absolutely continuous with respect to $\mu$.
\end{enumerate}
\end{proposition}
\begin{proof}
By \cite{Hut81} the support $\Delta$ of the unique stationary
measure $\mu$ is the Hutchinson attractor.
Using the Lebesgue decomposition one can split
$\mu=\mu_{ac}+\mu_{s}$ into an absolute part and a singular part.
Observe that both measures $\mu_{ac}$ and $\mu_s$ satisfy the
self-similarity relation like~(\ref{SSA}). Thus $\mu$ is either
singular or absolutely continuous with respect to Lebesgue measure.

To conclude the result, we will prove that if $\mu$ is not
singular with respect to Lebesgue measure $\lambda$  then
$\mu$ is equivalent to the restriction of $\lambda$ to the support
of $\mu$. Suppose that $\mu$ is absolutely continuous but not
equivalent to the restriction of the Lebesgue measure.
This implies that $\lambda(\Delta)>0$ and, simultaneously, the existence of a
measurable set $A\subset \Delta$ with $\lambda(A)>0$ and
$\mu(A)=0$. Let $$\Theta=\mathrm{Orb}^-_\mathcal{H}(A)\cap \Delta.$$
Since $\lambda(A)>0$, Lemma~\ref{lem:key} implies that
$\lambda(\Theta)=\lambda(\Delta)$. On the other hand, the
self-similarity of the measure implies that $\mu(h^{-1}(A))=0$,
for all $h\in\IFS(\mathcal{H})$ and hence $\mu(\Theta)=0$. This
means that $\mu$ is singular with respect to the restriction of the
Lebesgue measure, contradicting the absolute continuity of $\mu$.
\end{proof}
\begin{remark}
Again, following~\cite{HLW02}, this result may easily
be generalized to the case of a Hausdorff $s$-dimensional measure
and also to a probability measure $\mu$ satisfying eigen-equation
$$
  \lambda\mu = \sum_{i=1}^k p_i(\cdot)  (\mu \circ h_i^{-1}),
$$
for some $\lambda>0$, where $p_i(\cdot)$ is a family of
continuous probability functions on $M$.
\end{remark}
\section{Blending regions} \label{blending}
In this section we introduce the notion of blending region for
semigroup actions as local feature allowing us to provide a broad
class of minimal actions having the expanding property (see
Theorem~\ref{pro1}). In fact, this is perhaps the simplest way to
yield a $C^1$-robust minimal action. Blending regions are also
connected with Hutchinson attractor with not empty interior of
certain contracting IFSs. This connection allows to apply the
results of Section~\ref{vitali} to construct a class of actions,
not necessarily conformal, for which minimality
implies ergodicity, assuming some local extra regularity in
the blending region (see Theorem~\ref{pro2}).

\subsection{Blending regions}
As it is mentioned before the notion of blending region is the main tool
to produce robust minimal actions (\cite{BKR12,BR13,GHS10,HN13}). Following the same essential
strategies, we prove a slightly different result on robust
minimality (see Theorem~\ref{pro1} below). We begin by the formal
definition of the notion.

\begin{definition}
\label{def-blending}
 An open set $B \subset M$ is called
\emph{blending region} for a semigroup $\Gamma$ of diffeomorphisms
of $M$ if there exist $h_1,\dots, h_k\in \Gamma$ and an open set
$D\subset M$ such that $\overline{B}\subset D$ and
\begin{enumerate}
\item \label{covering-property} $\overline{B}\subset h_1(B)\cup \dots \cup h_k(B)$,
\item $h_i: \overline{D} \to D$ is a contracting map for $i=1,\dots,k$.
\end{enumerate}
The semigroup action generated by $\mathcal{H}=\{h_1,\dots,h_k\}$
on $\overline{D}$ is called \emph{associated contracting iterated
function system}. A blending region $B$ is said to be
\emph{globalized}  if there exist maps
$T_1,\dots,T_m,S_1,\dots,S_n \in \Gamma$ such that
%\begin{equation*}
$$M=T^{-1}_1(B)\cup \dots \cup T^{-1}_m(B)=S_1(B)\cup \dots \cup S_n(B).$$
\end{definition}

It is not difficult to see that if $\Gamma$ acts forward and
backward minimally on $M$ then any open subset of $M$ is
globalized for $\Gamma$. For instance, this is the case of a
minimal group action.

\subsection{Globalized blending regions, a criterium to yield robust minimality}
\label{ss:minimal-blender} Theorem~\ref{pro1}, below, provides
relatively broad class of examples of robustly minimal actions.
More precisely, it says that if the minimality of an action
realized by a blending region then the action is robustly minimal.
In view of Theorem~\ref{thmA}, the main point of the proof is that
the existence of a blending region for a minimal action converts
the action into an expanding one.

% As a corollary of the result
%above one gets that a blending region provides also the robustness
%of the ergodicity assuming global $C^{1+\alpha}$-regularity of the
%generators.
%By a \emph{forward} (resp.~\emph{backward}) minimal
%semigroup action we understand that the action is minimal (resp.~the
%action of the semigroup generated by the inverse maps is minimal).
\begin{theorem}
\label{pro1}
%\begin{corollary}
%\label{prop:minimalidad}
Consider a finitely generated semigroup $\Gamma$ %generated by a family
of $C^1$-diffeomorphisms of a compact manifold $M$.
%with a blending
%region
%$B$ having a forward cycle.
Assume that there exist a globalized open set $B\subset M$ for
$\Gamma$, an open set $D \subset M$ with $\overline{B}\subset D$
and maps $h_1,h_2,\dots \in \Gamma$ such that each $h_i:
\overline{D}\to D$ is a contraction of rate $\beta<1$ and
\begin{equation}
\label{cover00} B\subset \bigcup_{i=1}^\infty h_i(B).
\end{equation}
%and
%\item \label{ite:2} $M=T^{-1}_1(B)\cup \dots \cup T^{-1}_m(B)=S_1(B)\cup \dots \cup S_n(B).$
%\end{enumerate}
Then, the action of $\Gamma$ is expanding and
%. Further, if $B$ has a cycle then the action $\Gamma$ is
$C^1$-robustly minimal. %Moreover, if the generators of $\Gamma$
%has derivatives H\"older continuous then it is also robustly ergodic
%with respect to the Lebesgue measure.
\end{theorem}
\begin{remark}
In Definition~\ref{def-blending}, the covering
property~\eqref{covering-property} holds for the closure of $B$.
Roughly, the strength of this definition is the robustness of the
property under the perturbation of the generators. In
Theorem~\ref{pro1}, we weaken this covering property and deduce
again the robust minimality.
\end{remark}
Before proving the theorem we prove a basic lemma.
\begin{lemma}
\label{lem-densidad-A} Let $B$ be an open set satisfying the
covering property ~\eqref{cover00}. Then for every $x\in B$, there
is a sequence $(i_n)_{n\geq 1}$, $i_n \in \mathbb{N}$  such that
$$
  x= \lim_{n\to\infty} h_{i_1}\circ
  \dots \circ h_{i_n}(y),~~ \text{for all} \ y\in B.
$$
\end{lemma}
\begin{proof}
We define recursively the sequence $(i_n)_{n\geq 1}$: once we have
found an integer $i_n$ such that
$$x\in h_{i_1}\circ\cdots \circ h_{i_n}(B),$$
we deduce from (\ref{cover00}) that
$$x\in h_{i_1}\circ\cdots \circ h_{i_n}(B)\subset \bigcup_{i=1}^{+\infty}
h_{i_1}\circ\cdots \circ h_{i_n}\circ h_i(B).$$ So, we can
find $i_{n+1}$ so that
$x\in h_{i_1}\circ\cdots \circ h_{i_n}\circ h_{i_{n+1}}(B).$
In such a way we have constructed a sequence of integers $(i_n)_{n\geq 1}$
such that
$$
x\in \bigcap_{n\geq 1} h_{i_1}\circ \dots \circ h_{i_n}(B).
$$
Moreover, since each $h_i$ is a contraction in $D$ of rate
$0<\beta<1$, it follows that
$$
\mathrm{diam}(h_{i_1}\circ \dots \circ h_{i_n}(B))\leq \beta^n
\mathrm{diam}(B).
$$
Since $x\in h_{i_1}\circ\cdots \circ h_{i_n}(B)$, we deduce that
for a given $y\in B$,
$$
d(h_{i_1} \circ \dots \circ h_{i_n}(y), x) \leq
\mathrm{diam}(h_{i_1}\circ \dots \circ h_{i_n}(B)) \leq  \beta^n
\mathrm{diam}(B)m
$$
for every $n\in \mathbb{N}$. Since $0<\beta<1$  then
$h_{i_1} \circ \dots \circ h_{i_n}(y)$  converges to $x$.
%$$x=\displaystyle \lim_{n\to \infty} h_{i_1} \circ
%\dots \circ h_{i_n}(y)
%$$
%and conclude proof. %the lemma is complete.
\end{proof}

\begin{proof}[Proof of Theorem~\ref{pro1}] Since
Given a point $x\in M$, in view of covering $M=S_1(B)\cup\cdots\cup S_n(B)$, there
is $i\in \{1,\dots, n\}$  such that $S_i^{-1}(x) \in B$. The
covering property~\eqref{cover00} allows us to iterate $S_i^{-1}(x)$ by some
$h_i^{-1}$ remaining in $B$. Since
$m(Dh_i^{-1}(z))\geq \beta^{-1}$, for all $z\in h_i(B)$ and
$i=1,\dots,k$, repeating this argument one gets $g\in\Gamma$ such
that $m(Dg^{-1}(x))>1$.
%That is, the action of $\Gamma$ on $M$ is expanding.

Given an open set $U$ and a point $x$ in $M$, by globalization
property, one gets
$$
M=T_1^{-1}(B)\cup\cdots\cup T_m^{-1}(B)=S_1(B)\cup\cdots S_n(B),
$$
one can find $i$ and $k$ in such a way that $T_i(x)\in B$ and $B\cap
S_k^{-1}(U)$ contains an open set. By Lemma~\ref{lem-densidad-A},
any point in $B$ has dense orbit in $B$, and so, there exists
$h\in \Gamma$ such that $h\circ T_i (x)\in S_k^{-1}(U)$ or
$S_k\circ h \circ T_i(x)\in U$. This shows the minimality of the
action. Now, Theorem~\ref{thmA} implies $C^1$-robustly minimality of the action.
%Under the assumption of extra regularity of the generators
%of $\Gamma$, Theorem~\ref{thmAA} implies the action is robustly
%ergodic with respect to the Lebesgue measure and conclude the proof
%of the proposition.
%
\end{proof}
\subsection{Ergodicity from globalized blending regions}
\label{ss:ergodic-blender} As a direct consequence of
Theorem~\ref{thmB} and Theorem~\ref{pro1} one gets the following:
% on the ergodicity of minimal actions having blending region.
\begin{corollary}
\label{corB} Every semigroup action of $C^{1+\alpha}$ conformal
diffeomorphisms having a globalized blending region is ergodic
with respect to  the Lebesgue measure.
\end{corollary}
Two global assumptions, conformality and $C^{1+\alpha}$-regularity, are involved in the result above.
In the sequel, using globalized blending
regions, we provide a new class of Lebesgue-ergodic minimal
expanding actions in which the assumptions hold only in a local region.

Let $B$ be a blending region for a semigroup $\Gamma$ as in
Definition~\ref{def-blending}. Note that in view of the covering
property~\eqref{cover00} and equality~\eqref{hutchinson}, one can
easily get the inclusion $B\subset \Delta$.

\begin{definition} We say that $B$ is a BDV-\emph{regular blending region} if
the associated contracting IFS is BDV-regular. We say that $B$ is
a \emph{$C^{1+\alpha}$ blending region} or a \emph{conformal
blending regions} if the generators of the associated contracting
IFS are $C^{1+\alpha}$ contracting  or conformal maps
respectively.
\end{definition}

Notice that in dimension one, every $C^{1+\alpha}$ blending region
is BDV-regular. Just like the one dimensional case, considering
$C^{1+\alpha}$-diffeomorphisms with complex eigenvalues one can
get a BDV-regular blending region in dimension two. These
particular cases provide $C^{1+\alpha}$-robustly conformal
blending regions.
%By conformal we mean that the generators are
%conformal maps, that is, their derivatives are positive scalar of
%orthogonal linear maps.
%and hence, robustly ergodic action with respect to the Lebesgue measure under local H\"older perturbations.
In higher dimensions, $C^{1+\alpha}$-conformal blending regions
are also BDV-regular. However, this regularity, i.e.
$C^{1+\alpha}$-conformality, does not persist in general under
perturbations of the associated contracting IFS.

%The following result provides examples of expanding minimal
%actions by not necessarily conformal $C^1$-diffeomorphisms
%where the minimality implies ergodicity only assuming that there
%exists a BDV-regular blending region.

\begin{theorem}
\label{pro2}
Consider a semigroup $\Gamma$ %generated by a family
of $C^1$-diffeomorphisms of a compact manifold. Suppose  that
there exists a globalized BDV-regular blending region $B$ for
$\Gamma$. Then, the action of $\Gamma$ is ergodic with respect to
the Lebesgue measure (and $C^1$-robustly minimal). Moreover, the
ergodicity persists under $C^1$-perturbations of the generators so
that $B$ remains as a BDV-regular blending region.
\end{theorem}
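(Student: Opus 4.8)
The plan is to reduce the global statement to the local ergodicity on the Hutchinson attractor already obtained in Theorem~\ref{pro5}, using the cycle to globalize. First I would recall that since the blending region $B$ has a cycle with respect to $\Gamma$, Theorem~\ref{pro1} applies (a blending region satisfies the covering property~\eqref{cover00} with $h_i\colon\overline D\to D$ contractions), so the action of $\Gamma$ is expanding and $C^1$-robustly minimal; this takes care of the parenthetical claim. For ergodicity, let $A\subset M$ be a $\Gamma$-invariant measurable set and suppose $\lambda(A^c)>0$ where $A^c=M\setminus A$; the goal is $\lambda(A^c)=1$. Let $\mathcal H=\{h_1,\dots,h_k\}$ be the associated contracting IFS, with Hutchinson attractor $\triangle$, and recall $B\subset\triangle$. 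The set $A^c$ is forward invariant for the backward semigroup, i.e.\ $h^{-1}(A^c)\subset A^c$ for every $h\in\Gamma$.

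The key step is to transfer positive measure of $A^c$ into a density point lying on $\triangle$, so that Theorem~\ref{pro5} (or rather its engine, Lemma~\ref{lem:key}) can be invoked. First I would show $\lambda(A^c\cap\triangle)>0$. This is where the cycle is used: write $M=T_1^{-1}(B)\cup\dots\cup T_m^{-1}(B)$, so $A^c=\bigcup_i\big(T_i^{-1}(B)\cap A^c\big)$; since $\lambda(A^c)>0$ some $T_j^{-1}(B)\cap A^c$ has positive measure, and applying the diffeomorphism $T_j$ (which preserves Lebesgue null sets, hence sends positive-measure sets to positive-measure sets) together with $T_j(A^c)\subset A^c$ gives $\lambda\big(B\cap A^c\big)>0$, hence $\lambda(\triangle\cap A^c)>0$. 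Now set $\Theta=\mathrm{Orb}_\mathcal H^-(A^c)$; by backward invariance $\Theta\cap\triangle=A^c\cap\triangle$ up to the pieces $h^{-1}(A^c\cap h(\triangle))$, but as in the proof of Theorem~\ref{pro5} one checks $A^c\cap\triangle$ is already IFS-invariant inside $\triangle$ in the relevant sense, so Lemma~\ref{lem:key} yields $\lambda(\triangle\setminus A^c)=0$, i.e.\ $\triangle\,\mathring\subset\, A^c$. In particular the open set $B$ satisfies $\lambda(B\setminus A^c)=0$.

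From $B\,\mathring\subset\, A^c$ the globalization is routine and mirrors the end of the proof of Theorem~\ref{thmB}: by $C^1$-robust minimality (in particular minimality), Lemma~\ref{preliminar} (Backward cycle) gives $S_1,\dots,S_n\in\Gamma$ with $M=S_1^{-1}(B)\cup\dots\cup S_n^{-1}(B)$; since $A^c$ is forward invariant under the backward action, $S_i^{-1}(B)\setminus A^c\subset S_i^{-1}(B\setminus A^c)$, and quasi-invariance of $\lambda$ under the $C^1$-diffeomorphism $S_i^{-1}$ forces $\lambda\big(S_i^{-1}(B)\setminus A^c\big)=0$; summing over $i$ gives $\lambda(M\setminus A^c)=0$, so $\lambda(A^c)=1$. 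This proves ergodicity. Finally, for the robustness statement: the hypotheses used — the cycle for $B$ (which is $C^1$-open once $B$ is an open set whose closure is covered and the maps $T_i^{-1},S_i$ are fixed elements of the semigroup, so a small perturbation still covers), the blending structure, and BDV-regularity of the associated IFS — are exactly what we assumed to persist, and the whole argument above uses only these ingredients plus Theorem~\ref{pro5} and Lemma~\ref{preliminar}, which themselves depend only on BDV-regularity and minimality; hence the conclusion persists. The main obstacle I anticipate is the bookkeeping in the first paragraph of the middle step — verifying carefully that $A^c\cap\triangle$ (rather than an IFS-invariant subset of $\triangle$) is the right object to feed into Lemma~\ref{lem:key}, i.e.\ checking the invariance $h^{-1}(A^c\cap h(\triangle))\subset A^c\cap\triangle$ for $h\in\IFS(\mathcal H)$, exactly as in the claim inside the proof of Theorem~\ref{pro5}; everything else is a direct transcription of earlier arguments.
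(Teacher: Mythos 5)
Your overall strategy coincides with the paper's: robust minimality from Theorem~\ref{pro1}; then locate a Lebesgue density point of $A^c$ inside $B\subset\triangle$; feed it to Lemma~\ref{lem:key} to get $\triangle\,\mathring{\subset}\,A^c$; and globalize with the cycle and quasi-invariance of Lebesgue measure. The final globalization step is correct, and your worry about the pieces $h^{-1}(A^c\cap h(\triangle))$ is unnecessary: here the generators of the associated IFS are elements of $\Gamma$ and $A^c$ is backward invariant under all of $\Gamma$, so $\mathrm{Orb}^-_{\mathcal H}(A^c)=A^c$ outright, with no need for the $\hat A$ bookkeeping from the proof of Theorem~\ref{pro5}.

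The one genuine error is in the step producing $\lambda(B\cap A^c)>0$. You use the backward covering $M=T_1^{-1}(B)\cup\dots\cup T_m^{-1}(B)$ together with the inclusion $T_j(A^c)\subset A^c$. But forward invariance $g(A)\subset A$ only gives $A\subset g^{-1}(A)$, hence $g^{-1}(A^c)\subset A^c$, i.e.\ $g(A^c)\supset A^c$ --- the \emph{opposite} inclusion. So from $\lambda(T_j^{-1}(B)\cap A^c)>0$ you only learn that $B\cap T_j(A^c)$ has positive measure, and since $T_j(A^c)$ may be strictly larger than $A^c$, nothing forces $\lambda(B\cap A^c)>0$ from this half of the cycle. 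The fix is to use the other half, $M=S_1(B)\cup\dots\cup S_n(B)$, exactly as in the paper's Lemma~\ref{lem-densidad}: one has $A^c\cap S_i(B)\subset S_i(A^c\cap B)$ precisely because $S_i^{-1}(A^c)\subset A^c$, so if $\lambda(A^c\cap B)=0$ then quasi-invariance would force $\lambda(A^c)=0$, a contradiction. With that correction (and the density point of $A^c\cap\triangle$ it yields, which lies in $DP(A^c)\cap DP(\triangle)$) your argument goes through and is essentially the paper's proof.
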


The main novelty of Theorem~\ref{pro2} is the local regularity
assumption, despite what was done in Theorem~\ref{thmB} (or
Corollary~\ref{corB}). That is, the regularity which is needed in
Theorem~\ref{pro2} limited to associated generators of the
contracting maps in the local blending region $B$. For instance,
to clarify the issue, consider an action $\Gamma$ on the circle
with a blending region $B$ whose associated contracting maps are
$C^{1+\alpha}$ in a neighborhood of $B$. Theorem~\ref{pro2}
implies the ergodicity of such action. Moreover, as we have
notified above, one can easily construct $C^{1+\alpha}$-robust conformal
blending regions in dimension two using complex eigenvalues.
Hence, as consequence of this theorem we get
Corollary~\ref{rem:ergodic-surface} and also the following:

 \begin{corollary}
\label{corC} Every forward and backward minimal semigroup action
(in particular minimal group actions) of $C^1$-diffeomorphisms of
a compact manifold with a $BDV$-regular blending region is
robustly\footnote{This robustness should be understood in the
sense describes in Theorem~\ref{pro2}} ergodic with respect to the
Lebesgue measure.
\end{corollary}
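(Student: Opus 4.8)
The plan is to obtain the corollary as an immediate consequence of Theorem~\ref{pro2}: the only thing that must be supplied is that a forward and backward minimal action furnishes a cycle for the given blending region $B$, together with the observation that this cycle — hence the whole hypothesis of Theorem~\ref{pro2} — is stable under the relevant perturbations.

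First I would record the elementary fact already alluded to in Subsection~\ref{blending}: if $\Gamma$ acts both forward and backward minimally on the compact manifold $M$, then \emph{every} nonempty open set $U\subset M$ has a cycle with respect to $\Gamma$. Indeed, forward minimality together with Lemma~\ref{preliminar} applied to $\Gamma$ yields $T_1,\dots,T_m\in\Gamma$ with $M=T_1^{-1}(U)\cup\dots\cup T_m^{-1}(U)$; backward minimality of $\Gamma$ is the same as forward minimality of the semigroup $\Gamma^{-1}$ generated by the inverse maps, so Lemma~\ref{preliminar} applied to $\Gamma^{-1}$ gives $R_1,\dots,R_n\in\Gamma^{-1}$ with $M=R_1^{-1}(U)\cup\dots\cup R_n^{-1}(U)$, and writing $S_j=R_j^{-1}\in\Gamma$ this is precisely $M=S_1(U)\cup\dots\cup S_n(U)$. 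Taking $U=B$ furnishes both halves of the cycle, so $B$ is a BDV-regular blending region \emph{having a cycle} with respect to $\Gamma$, and Theorem~\ref{pro2} applies directly: the action of $\Gamma$ is ergodic with respect to Lebesgue measure (and $C^1$-robustly minimal, which is more than the corollary claims).

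For the robustness in the sense of the footnote, I consider $C^1$-perturbations of the finitely many generators under which $B$ remains a BDV-regular blending region, and I must only check that the cycle of $B$ survives, so that Theorem~\ref{pro2} applies verbatim to the perturbed system. But the cycle uses only the finitely many fixed words $T_1,\dots,T_m,S_1,\dots,S_n$ in the generators: under a sufficiently small $C^1$- (in fact $C^0$-) perturbation, the corresponding words $\widetilde T_i,\widetilde S_j$ are $C^0$-close to $T_i,S_j$, and since $B$ is open and $M$ is compact the finite open covers $\{T_i^{-1}(B)\}$ and $\{S_j(B)\}$ have positive Lebesgue numbers, so $\{\widetilde T_i^{-1}(B)\}$ and $\{\widetilde S_j(B)\}$ still cover $M$. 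Hence $B$ is still a BDV-regular blending region with a cycle for the perturbed semigroup, and Theorem~\ref{pro2} gives ergodicity of the perturbed action, which is the asserted robust ergodicity. I do not expect a genuine obstacle here: all the analytic content lives in Theorem~\ref{pro2} (through Theorem~\ref{pro5} and Lemma~\ref{lem:key}), and the only point needing a little care is exactly this last bookkeeping step, namely that the cycle condition on $B$ is an open condition and therefore persists even though global minimality of the action need not.
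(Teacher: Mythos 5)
Your proposal is correct and follows exactly the route the paper intends: the paper states (in Subsection~\ref{blending}) without proof that forward and backward minimality yields the cycle condition for every open set, and then Corollary~\ref{corC} is read off from Theorem~\ref{pro2}; your derivation of the two halves of the cycle via Lemma~\ref{preliminar} applied to $\Gamma$ and to the inverse semigroup is the intended justification of that claim. Your closing observation that the cycle is a finite open condition and hence persists under $C^1$-perturbation (even though global minimality need not) is a correct and worthwhile filling-in of the robustness assertion in the footnote.
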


%\begin{corollary}
%\label{rem:ergodic-surface}
% Any compact surface admits a $C^{1+\alpha}$-robust minimal
%ergodic with respect to Lebesgue semigroup action whose generators
%are not necessarily conformal maps.
%\end{corollary}

To prove Theorem~\ref{pro2} we need the following lemma.
\begin{lemma}
\label{lem-densidad} Consider a semigroup $\Gamma$ of
$C^1$-diffeomorphisms of $M$. Suppose that there are an open set
$B\subset M$ and maps $S_1,\dots,S_n\in \Gamma$ such that
$M=S_1(B)\cup\cdots \cup S_n(B)$. Then, $B \cap DP(M\setminus
A)\not = \emptyset$ for all $\Gamma$-invariant set $A\subset M$
whose complementary has positive measure.
\end{lemma}
\begin{proof}
%First assume that $A$ is totally invariant, i.e., $g(A)=A$ for all
%$g\in \Gamma$, with positive Lebesgue measure and suppose that
%$DP(A)\cap B = \emptyset$. Hence, Lebesgue density theorem implies
%that $\lambda(A\cap B)=0$.  On the other hand, in view of
%invariance condition we have
%$$
%A= A\cap M= A\cap\big( \bigcup_{i=1}^n S_i(B)\big) \subset
%\bigcup_{i=1}^n S_i(A\cap B).
%$$
%By the quasi-invariance of the Lebesgue measure for smooth maps, it
%follows that $\lambda(S_i(A\cap B))=0$ for all $i=1,\ldots,n$ and
%therefore $\lambda(A)=0$ which is a contradiction.
%
Let $A$ be a forward invariant set, i.e., $g(A) \subset \Gamma$,
for all $g\in \Gamma$, such that its complementary $A^c=M\setminus A$ has
positive Lebesgue measure. Suppose that $DP(A^c) \cap B
=\emptyset$. The Lebesgue density theorem implies that
$\lambda(A^c\cap B)=0$. On the other hand, in view of the
(forward) invariance, we have
$$
A^c=A^c\cap M= A^c\cap \big(\bigcup_{i=1}^n S_i(B)\big) \subset
\bigcup_{i=1}^n S_i(A^c\cap B).
$$
Now,  the quasi-invariance of the Lebesgue measure implies
$\lambda(S_i(A^c\cap B))=0$, for all $i=1,\ldots,n$ and hence
$\lambda(A^c)=0$. This contradiction concludes the proof in this
case.  \end{proof}

Now, we are ready to prove Theorem~\ref{pro2}.
\begin{proof}[Proof of Theorem~\ref{pro2}]
%We are considering a group/semigruop $\Gamma$ generated by $C^1$-diffeomorphisms of a compact manifold with a BDV-regular blending region $B$.
In view of Theorem~\ref{pro1}, the action of $\Gamma$ on $M$ is
$C^1$-robustly minimal. It remains to prove the ergodicity of the
action with respect to the Lebesgue measure. Let $\mathcal{H}\subset \Gamma$
be a finite family of  generators of the associated BDV-regular
contracting IFS with the blending region~$B$. As mentioned before,
$$
  B\subset \Delta =
  \overline{\mathrm{Orb}^+_\mathcal{H}(x)},~~\text{for all $x\in \Delta$},
$$
%{\color{blue} I agree with you that the claim $B\subset \Delta$
%is not immediate from Lemma 3.3 (but for me is clear).
%I will explain you here as we can get this and you complete
%this claim with the details that you consider necessaries. \\
%The work of Hutchinson implies that $\Delta=
%\overline{\mathrm{Orb}^+_\mathcal{H}(x)}$. This follows since
%$\mathcal{H}^n(A) \to \Delta$ in the Hausdorff metric for all
%compact set $A\subset \overline{D}$ where
%$\mathcal{H}^n=\mathcal{H}\circ \mathcal{H}^{n-1}$ and
%$$
%\mathcal{H}(A)\eqdef h_1(A)\cup\dots\cup h_k(A).
%$$
%Indeed, it suffices to note that if $x\in \Delta$ then
%$\mathcal{H}^n(\{x\})\subset \Delta$ for all $n\in\mathbb{N}$.
%Now, we want to see that $B\subset \Delta$. The easiest way to
%see this is to note that $B\subset \mathcal{H}^n(\overline{B})$ for
%all $n\in\mathbb{N}$. Hence taking limit one concludes the required
%inclusion. From Lemma~3.3 one can also get this result but it is a
%little bit more complicate to write but the idea is the same.
%Take $x\in B$. For every $\varepsilon>0$, Lemma~3.3 implies that there exists $n_0\in \mathbb{N}$ such that $x\in ( \mathcal{H}^n(\overline{B})+\varepsilon)$ for all $n\geq n_0$. Recall that $A+\varepsilon$ denotes the set of point $y$ such that $d(a,y)<\varepsilon$ for some $a\in A$. Taking now limit as $n\to \infty$ and $\varepsilon\to 0$ we get that $x\in \Delta$. \\
%}
where $\mathrm{Orb}^+_\mathcal{H}(x)=\{h(x): h\in
\IFS(\mathcal{H})\}$ and $\Delta$ is the Hutchinson attractor. By
assumption, there exist maps $T_1,\dots,T_m,S_1,\dots,S_n\in
\Gamma$ such that
\begin{equation*}
M=T^{-1}_1(B)\cup \cdots \cup T^{-1}_m(B)=S_1(B)\cup \dots \cup
S_n(B).
\end{equation*} Consider a measurable set $A \subset M$
such that $g(A)\subset A$, for all $g\in \Gamma$.  Suppose that
$\lambda(A^c)>0$ where $A^c=M\setminus A$. We will prove that
$\lambda(A^c)=1$.

By Lemma~\ref{lem-densidad}, $B\cap DP(A^c)\not= \emptyset$. Since
$B$ is an open set in $\Delta$ one has that $DP(A^c)\cap
DP(\Delta)\not=\emptyset$. Hence, Lemma~\ref{lem:key} and the
invariance of $A^c$ imply that
$$
B\subset \Delta\,\mathring\subset\,
\mathrm{Orb}^-_\mathcal{H}(A^c)=A^c.
$$
%Since the action is minimal, Lemma~\ref{preliminar}, implies that there exists a finite set of maps $g_1,\dots, g_\ell \in \Gamma$
%such that
%$M= g_1^{-1}(B)
%\cup \dots\cup g_\ell^{-1}(B).
%$
Since $g^{-1}(A^c)\subset A^c$, for $g\in\Gamma$, then $
T_i^{-1}(B)\setminus A^c \subset T_i^{-1}(B\setminus A^c)$ for all
$i$. Now, the quasi-invariance of $\lambda$ for
$C^1$-diffeomorphisms implies that $\lambda(T_i^{-1}(B)\setminus
A^c)=0$ and so $\lambda(M \setminus A^c) =0$. This proves that
$\lambda(A^c)=1$, concluding the proof.
\end{proof}

\section*{Acknowledgements}
We are grateful to Jiagang Yang, Carlos Me\~nino, Meysam Nassiri
and Artem Raibekas for their useful comments and discussions.
During the preparation of this article the authors were partially
supported by the following fellowships: P.~G.~Barrientos by
MTM2011-22956 and MTM2014-56953-P project (Spain) and CNPq
post-doctoral fellowship 158839/2012-9 (Brazil); A. Fakhari by
grant from IPM (No. 94370127). The second author also thanks ICTP
for supporting through the association schedule.

\end{document}